\newtheorem{theorem}{Theorem}[section]
\newtheorem{lemma}[theorem]{Lemma}
\newtheorem{corollary}[theorem]{Corollary}
\numberwithin{equation}{section}
\newcommand{\br}{\mathbf {R}}
\newcommand{\h}{\frak{h}_n}
\newcommand{\pari}{\frac{\partial}{\partial x^i}}
\newcommand{\parj}{\frac{\partial}{\partial x^j}} 
\newcommand{\son}{\frak{so}(n)}
\newcommand{\lra}{\longrightarrow}
\newcommand{\fhn}{\frak{h}_{n}}
\newcommand{\p}{\partial}
\newcommand{\g}{\gamma^*_n}
\title{Leibniz Cohomology and Connections on Differentiable Manifolds}
\author{Jerry M. Lodder}
\thanks{Mathematical Sciences, Deptartment 3MB; New Mexico State University; Las Cruces, NM, 88003;
USA, \tt{jlodder@nmsu.edu}}
\begin{document}
\maketitle

\noindent
{\bf{Abstract.}}  We show how an affine connection on a Riemannian manifold occurs naturally as a cochain
in the complex for Leibniz cohomology of vector fields with coefficients in the adjoint representation.  The Leibniz 
coboundary of the Levi-Civita connection can be expressed as a sum of two terms, one the Laplace-Beltrami 
operator and the other a Ricci curvature term.  The vanishing of this coboundary has an interpretation in terms 
of eigenfunctions of the Laplacian.  Additionally, we compute the Leibniz cohomology with adjoint coefficients 
for a certain family of vector fields on Euclidean $\br^n$ corresponding to the affine orthogonal Lie algebra, $n \geq 3$.

\medskip
\noindent
MSC Classification:  17A32, 53B05.  

\smallskip
\noindent
Key Words:  Leibniz Cohomology, Levi-Civita Connection, Laplace Operator, Affine Orthogonal Lie Algebra,
Eigenfunctions of the Laplacian.

\section{Introduction}
We study Leibniz cohomology, $HL^*$, of a Lie algebra $\frak{g}$ with coefficients in its adjoint representation, 
written $HL^* (\frak{g}; \, \frak{g})$, as a setting for capturing an affine connection \cite[Chapter 2]{doCarmo} 
as a cochain in the Leibniz complex  
$CL^* ( \frak{g}; \, \frak{g}) = {\rm{Hom}}_{\br}(\frak{g}^{\otimes *}, \, \frak{g})$.  
More specifically, let $M$ be a (real) differentiable manifold and $C^{\infty}(M)$ the ring of real-valued differentiable functions 
on $M$.  Let $\chi (M)$ be the Lie algebra of differentiable vector fields on $M$.  An affine or Koszul connection 
$\nabla$ on $M$ is an $\br$-linear map
$$  \nabla : \chi (M) \underset{\br}{\otimes} \chi (M) \to \chi (M)  $$
that is $C^{\infty}(M)$-linear in the first (left) tensor factor, but a derivation on $C^{\infty}(M)$ functions in the second (right) 
tensor factor.   For $X$, $Y \in \chi (M)$, we write $\nabla (X \otimes Y) = \nabla_X (Y)$.  For $f \in C^{\infty}(M)$, we have
\begin{equation} \label{nabla}
\nabla_X (f Y) = X(f) \, Y + f \, \nabla_X (Y) .  
\end{equation}
The failure of $\nabla$ to be $C^{\infty}(M)$ linear in its second $\chi (M)$ factor precludes $\nabla$ from being a two-tensor. 
In $\S$ 2 we interpret $\delta \nabla$, the Leibniz coboundary of $\nabla$, in terms of the Laplace-Beltrami operator 
and the curvature operator $R : \chi (M)^{\otimes 3} \to \chi (M)$.  The vanishing of $\delta \nabla$ has an 
interpretation in terms of eigenfunctions of the Laplacian on manifolds of constant scalar curvature,
Theorem \eqref{eigenfunction}.

Leibniz cohomology with coefficients in the adjoint representation for semi-simple Leibniz algebras has been studied in 
\cite{Feldvoss-Wagemann}, while $HL^*( \frak{L}; \, \frak{L})$ has been used to study deformation theory 
in the category of Leibniz algebras \cite{Fil-Mandal}, \cite{Fil-Mandal-Muk}.   Recall that a left Leibniz algebra
$\frak{L}$ over a commutative ring $k$ is a $k$-module equipped with a bilinear operation
$\circ: \frak{L} \times \frak{L} \to \frak{L}$ that is a derivation from the left:
$$  x \circ ( y \circ z ) = ( x \circ y ) \circ z + y \circ ( x \circ z ), \ \ \ x, \, y, \, z \in \frak{L} .  $$
There is a similar notion for a right Leibniz algebra \cite{Loday-Pir}.  A Lie algebra $\frak{g}$ with its
skew-symmetric bracket is both a left and a right Leibniz algebra, with the derivation property subsumed in
the Jacobi identity.   When working with Lie algebras in this paper, we will denote the Lie bracket as $[ \ , \ ]$ 
instead of $\circ$.  Leibniz algebras were called $D$-algebras by Bloh \cite{Bloh} and later rediscovered and 
studied  extensively by Loday \cite[\S 10.6.1]{Loday1992}.

Since the bracket of a Lie algebra $\frak{g}$ is skew-symmetric, there is more flexibility in stating  the cochain complex
yielding  $HL^*(\frak{g} ; \, \frak{g} )$ than for a  (left or right) Leibniz algebra \cite{Feldvoss-Wagemann} \cite{Loday-Pir}.  
Let
$$  CL^n( \frak{g} ; \, \frak{g} ) = {\rm{Hom}}_k ( \frak{g}^{\otimes n}, \, \frak{g} ), \ \ \ n \geq 0,  $$
where $\frak{g}$ is a Lie algebra over the commutative ring $k$.  For calculations in this paper, $k = \br$.  
Let $\frak{g}$ act on itself via the adjoint representation.  For $g \in \frak{g}$, the linear map
${\rm{ad}}_g : \frak{g} \to \frak{g}$ is given by
$$  g \cdot x :=  {\rm{ad}}_g (x) = [g, \, x], \ \ \ x \in \frak{g}.  $$
Then $HL^*(\frak{g}; \, \frak{g})$ is the cohomology of the cochain complex $CL^*(\frak{g}; \, \frak{g} )$:
\begin{equation}  \label{HL-cochains}
\begin{split} 
& {\rm{Hom}}_k (k, \, \frak{g}) \overset{\delta}{\longrightarrow} {\rm{Hom}}_k (\frak{g}, \, \frak{g}) 
\overset{\delta}{\longrightarrow} {\rm{Hom}}_k (\frak{g}^{\otimes 2}, \, \frak{g}) \overset{\delta}{\longrightarrow} 
{\rm{Hom}}_k (\frak{g}^{\otimes 3}, \, \frak{g}) \overset{\delta}{\longrightarrow}  \\
& \ldots \ \overset{\delta}{\longrightarrow} {\rm{Hom}}_k( \frak{g}^{\otimes n}, \, \frak{g})
\overset{\delta}{\longrightarrow} {\rm{Hom}}_k( \frak{g}^{\otimes (n+1)}, \, \frak{g})
\overset{\delta}{\longrightarrow} \ldots  \  .
\end{split}
\end{equation}
For $f : \frak{g}^{\otimes n} \to \frak{g}$, the coboundary $\delta f : \frak{g}^{\otimes (n+1)} \to \frak{g}$ is given by
\begin{equation} \label{coboundary}
\begin{split}
(\delta f)&(g_1 \otimes g_2 \otimes \ldots \otimes g_{n+1}) =  \\
& \sum_{i=1}^{n+1} (-1)^i g_i \cdot f (g_1 \otimes g_2 \otimes \ldots \widehat{g}_i  \ldots \otimes g_{n+1} )  \\
& + \sum_{1 \leq i < j \leq n+1} (-1)^j f(g_1 \otimes \ldots \otimes g_{i-1} \otimes [g_i,  \, g_j] \otimes g_{i+1} 
\otimes \ldots \widehat{g}_j \ldots \otimes g_{n+1}).  
\end{split}
\end{equation}

In the next section we study $\delta \nabla$ for the Levi-Civita connection $\nabla$ on a differentiable manifold.  
In $\S$ 3 we offer a calculation of $HL^*(\h ; \, \h)$, where $\h$ is a certain family of vector fields on $\br^n$ 
isomorphic to the affine orthogonal Lie algebra, $n \geq 3$.  This last section contains a subsection on
the Pirashvili spectral sequence needed for the calculation of $HL^*$ as well as a subsection for the Lie-algebra 
cohomology of $\h$ with adjoint coefficients.

\section{The Coboundary of the Levi-Civita Connection}

Let $M$ be an $n$-dimensional Riemannian manifold, $U \subseteq M$ an open subset of $M$, 
and $x : \br^n \overset{\cong}{\longrightarrow} U$ a chart in the $C^{\infty}$-structure on $M$.  
Then $\pari$, $i = 1$, 2, $\ldots \, $, $n$ are
local vector fields on $U$ with $[\pari, \, \parj] = 0$.  Let $\chi(M)$ be the Lie algebra of all 
(differentiable) vector fields on $M$.  The Levi-Civita connection is the unique affine connection on $M$ that is 
symmetric and compatible with the Riemannian metric \cite[Theorem 3.6]{doCarmo}.  For a symmetric connection 
$\nabla$ and $X$, $Y \in \chi(M)$,
we have $\nabla_X(Y) - \nabla_Y(X) = [X, \, Y]$, and it follows that $\nabla_{\pari} (\parj ) = \nabla_{\parj} (\pari)$.
Let $\nabla(X \otimes Y) = \nabla_X (Y)$.  In general, for any affine connection on $M$ and $X_1$, $X_2$, 
$X_3 \in \chi(M)$, from Equation \eqref{coboundary}, we have 
\begin{equation} \label{deltaDelta}
\begin{split}
\delta \nabla  (X_1 \otimes X_2 \otimes X_3 )  = 
& - [X_1, \, \nabla_{X_2}(X_3)] + [X_2, \, \nabla_{X_1}(X_3)] - [X_3, \, \nabla_{X_1}(X_2)]  \\
& + \nabla_{[X_1, \, X_2]} (X_3) - \nabla_{[X_1, \, X_3]}(X_2) - \nabla_{X_1}([X_2, \, X_3]) .
\end{split}
\end{equation}

\begin{lemma}  \label{R-1-delta}
On Euclidean $\br^1$ let $\frac{d}{dx}$ be the canonical unit vector field for the identity chart.  Let
$X_1 = f_1 \frac{d}{dx} $, $X_2 = f_2  \frac{d}{dx}$, $X_3 = f_3 \frac{d}{dx}$ be vector fields, where
$f_1$, $f_2$, $f_3 : \br \to \br$ are $C^{\infty}$ functions.  Let $\nabla$ be the Levi-Civita connection.  Then
\begin{equation}
\delta \nabla \Big( f_1 \frac{d}{dx} \otimes f_2 \frac{d}{dx} \otimes f_3 \frac{d}{dx} \Big) =
\Big( f'_1 f_2  f'_3 - f_1 f'_2 f'_3 - f_1f_2 f''_3 \Big) \frac{d}{dx} ,
\end{equation}
where the latter can be interpreted as a certain alternating sum of terms in $(f_1 f_2 f'_3)'$.  
\end{lemma}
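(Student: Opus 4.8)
The plan is to exploit the flatness of Euclidean $\br^1$: there the Levi-Civita connection is the trivial connection, so $\nabla_{\frac{d}{dx}}\big(\frac{d}{dx}\big) = 0$. Combined with the defining properties of an affine connection — $C^{\infty}(\br)$-linearity in the left slot and the derivation rule \eqref{nabla} in the right slot — this collapses $\nabla$ to a single first-order formula. First I would record the two local identities
\begin{equation*}
\Big[ f\frac{d}{dx}, \, g\frac{d}{dx} \Big] = (f g' - g f')\frac{d}{dx}, \qquad \nabla_{f\frac{d}{dx}}\Big( g\frac{d}{dx} \Big) = f g'\,\frac{d}{dx},
\end{equation*}
for $f$, $g \in C^{\infty}(\br)$, the first being the standard bracket of vector fields on the line and the second following from $\nabla_{\frac{d}{dx}}\big(g\frac{d}{dx}\big) = g'\frac{d}{dx}$ together with linearity in the left factor.

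With these in hand, I would substitute $X_1$, $X_2$, $X_3$ directly into the six summands of \eqref{deltaDelta}. The three connection terms $\nabla_{X_2}(X_3)$, $\nabla_{X_1}(X_3)$, $\nabla_{X_1}(X_2)$ become $f_2 f_3'\frac{d}{dx}$, $f_1 f_3'\frac{d}{dx}$, $f_1 f_2'\frac{d}{dx}$, each of which is then fed into a bracket against the remaining field; the three terms $\nabla_{[X_i,X_j]}(X_k)$ reduce to the bracket coefficient times a single derivative of the third function. Expanding every summand by the product rule yields monomials in $f_1$, $f_2$, $f_3$ and their first and second derivatives.

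The decisive step is the bookkeeping. I would collect the coefficient of each monomial among $f_1 f_2' f_3'$, $f_1 f_2 f_3''$, $f_1' f_2 f_3'$, $f_1' f_2' f_3$, and $f_1 f_2'' f_3$. The two terms carrying $f_1' f_2' f_3$ cancel against each other, as do the two carrying $f_1 f_2'' f_3$, and the surviving coefficients are $+1$ on $f_1' f_2 f_3'$ and $-1$ on each of $f_1 f_2' f_3'$ and $f_1 f_2 f_3''$, giving exactly
\begin{equation*}
\big( f_1' f_2 f_3' - f_1 f_2' f_3' - f_1 f_2 f_3'' \big)\frac{d}{dx}.
\end{equation*}

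Finally, comparing with the product rule $(f_1 f_2 f_3')' = f_1' f_2 f_3' + f_1 f_2' f_3' + f_1 f_2 f_3''$ shows that $\delta\nabla$ selects these three summands with signs $+$, $-$, $-$, which is the asserted reading of the result as an alternating sum of the terms of $(f_1 f_2 f_3')'$. The only genuine hazard is sign and index error across the six terms of \eqref{deltaDelta}; there is no analytic or geometric subtlety, since flatness reduces $\nabla$ to ordinary differentiation.
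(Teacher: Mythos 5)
Your proposal is correct and follows essentially the same route as the paper's proof: record the identities $[f\frac{d}{dx}, g\frac{d}{dx}] = (fg'-gf')\frac{d}{dx}$ and $\nabla_{f\frac{d}{dx}}(g\frac{d}{dx}) = fg'\frac{d}{dx}$, substitute into Equation \eqref{deltaDelta}, and combine terms. Your expansion and bookkeeping of the cancellations (which the paper leaves implicit in ``combining terms'') check out, including the final comparison with $(f_1 f_2 f_3')'$.
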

\begin{proof}
In general $[ f {\frac{d}{dx}}, \, g  {\frac{d}{dx}}] = (fg' - gf')  {\frac{d}{dx}}$, and 
on Euclidean space
$$  \nabla_{f \tfrac{d}{dx}} (g \tfrac{d}{dx}) = fg' \tfrac{d}{dx}.  $$
From Equation \eqref{deltaDelta}, we have
\begin{equation*}
\begin{split}
\delta & \nabla  \big( f_1 \tfrac{d}{dx} \otimes f_2 \tfrac{d}{dx} \otimes f_3 \tfrac{d}{dx} \big) = 
 - [f_1 \tfrac{d}{dx}, \, f_2 f_3' \tfrac{d}{dx}] + [f_2 \tfrac{d}{dx}, \, f_1 f_3' \tfrac{d}{dx}] - 
[f_3 \tfrac{d}{dx}, \, f_1 f_2' \tfrac{d}{dx}]  \\
& + \nabla_{(f_1f_2' - f_2 f_1') \frac{d}{dx}} (f_3 \tfrac{d}{dx}) 
- \nabla_{(f_1f_3' - f_3 f_1') \frac{d}{dx}} (f_2 \tfrac{d}{dx})
- \nabla_{f_1 \frac{d}{dx}} (f_2 f_3' - f_3 f_2') \tfrac{d}{dx}.   
\end{split}
\end{equation*}
The result now follows by combining terms.  
\end{proof}
Thus, on $\br^1$, $\delta \nabla( \frac{d}{dx}\otimes \frac{d}{dx} \otimes f \frac{d}{dx} ) = -f'' \frac{d}{dx}$.  Moving to 
$\br^2$ or to $\br^n$ in general, we recover the Laplacian of $f$ when used as a coefficient function on the third vector field.

\begin{lemma}
On Euclidean $\br^2$, let $\frac{\partial}{\partial x} = \frac{\partial}{\partial x^1}$ and 
$\frac{\partial}{\partial y} = \frac{\partial}{\partial x^2}$ for the identity chart.  Let $\nabla$ be the Levi-Civita  connection
and let $Z = c_1 \frac{\partial}{\partial x} + c_2  \frac{\partial}{\partial y}$, where $c_1$, $c_2 \in \br$.  Then
$$  \delta \nabla \big( ( \tfrac{\partial}{\partial x} \otimes \tfrac{\partial}{\partial x} + \tfrac{\partial}{\partial y} \otimes
\tfrac{\partial}{\partial y}) \otimes f Z \big) = - (f_{xx} + f_{yy}) Z .  $$
\end{lemma}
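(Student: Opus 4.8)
The plan is to use linearity of $\delta \nabla$ in its tensor argument, splitting the computation into two structurally identical pieces, and then to exploit the flatness of the Euclidean connection to annihilate all but one of the six terms in \eqref{deltaDelta}.

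First I would record the two facts about the Levi-Civita connection on flat $\br^2$ that do all the work. The coordinate fields commute, $[\partial_x, \partial_y] = 0$ (and trivially $[\partial_x, \partial_x] = 0$), and the connection is the componentwise directional derivative, $\nabla_{g \partial_i}(h \partial_j) = g \, \partial_i(h) \, \partial_j$, generalizing the rule $\nabla_{f \frac{d}{dx}}(g \frac{d}{dx}) = f g' \frac{d}{dx}$ used in Lemma \ref{R-1-delta}. In particular $\nabla_{\partial_i} \partial_j = 0$, and since the components of any coordinate field are constant, $\nabla_Y \partial_i = 0$ for every $Y \in \chi(\br^2)$.

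By linearity it suffices to evaluate $\delta \nabla$ on $\partial_x \otimes \partial_x \otimes f Z$ and on $\partial_y \otimes \partial_y \otimes f Z$ separately. Taking $X_1 = X_2 = \partial_x$ and $X_3 = f Z$ in \eqref{deltaDelta}, the first two terms $-[\partial_x, \nabla_{\partial_x}(f Z)]$ and $+[\partial_x, \nabla_{\partial_x}(f Z)]$ cancel; the term $-[f Z, \nabla_{\partial_x} \partial_x]$ and the term $+\nabla_{[\partial_x, \partial_x]}(f Z)$ vanish because $\nabla_{\partial_x} \partial_x = 0$ and $[\partial_x, \partial_x] = 0$; and $-\nabla_{[\partial_x, f Z]} \partial_x$ vanishes because the connection annihilates $\partial_x$. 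Only the last term $-\nabla_{\partial_x}[\partial_x, f Z]$ survives.

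It then remains to evaluate this survivor, and here the hypothesis that $Z$ has constant coefficients $c_1$, $c_2$ is what keeps things clean: $[\partial_x, f Z] = \partial_x(f c_1) \partial_x + \partial_x(f c_2) \partial_y = f_x Z$, so that $\nabla_{\partial_x}(f_x Z) = f_{xx} Z$ and $\delta \nabla (\partial_x \otimes \partial_x \otimes f Z) = - f_{xx} Z$. The identical steps with $\partial_y$ in place of $\partial_x$ give $- f_{yy} Z$, and summing yields $-(f_{xx} + f_{yy}) Z$. There is no genuine obstacle beyond bookkeeping; the only point demanding care is checking that the five non-surviving terms truly vanish, which is precisely where the flatness of the connection and the constancy of $Z$ are invoked.
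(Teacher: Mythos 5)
Your proof is correct and takes essentially the same route as the paper: the paper's proof simply states that the result ``follows from Lemma \eqref{R-1-delta} or a direct calculation from the definition of $\delta$,'' and your argument is exactly that direct calculation, specializing Equation \eqref{deltaDelta} to $X_1 = X_2 = \partial_x$ (resp.\ $\partial_y$) and $X_3 = fZ$. Your identification of the cancelling pair, the four vanishing terms (via flatness and constancy of the coefficients of $Z$), and the single surviving term $-\nabla_{\partial_x}[\partial_x, fZ] = -f_{xx}Z$ is accurate throughout.
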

\begin{proof}
The proof follows from Lemma \eqref{R-1-delta} or a direct calculation from the definition of $\delta$.
\end{proof}

\begin{corollary}
For $Z = c_1 \frac{\p}{\p x} + c_2 \frac{\p}{\p y}$ a constant unit vector field on $\br^2$, i.e., 
$c_1^2 + c_2^2 =1$, we have
$$  \big{\langle}   \delta \nabla \big( ( \tfrac{\partial}{\partial x} \otimes \tfrac{\partial}{\partial x} + \tfrac{\partial}{\partial y} \otimes
\tfrac{\partial}{\partial y}) \otimes f Z \big), \ Z \big{\rangle} = - (f_{xx} + f_{yy}),  $$
where $\langle \, , \,  \rangle$ denotes the (Euclidean) metric on $\br^2$.
\end{corollary}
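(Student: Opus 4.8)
The plan is to reduce the Corollary to the Lemma immediately preceding it and then exploit the bilinearity of the metric together with the unit-length hypothesis on $Z$. Since the Lemma already computes the vector field $\delta \nabla \big( ( \tfrac{\partial}{\partial x} \otimes \tfrac{\partial}{\partial x} + \tfrac{\partial}{\partial y} \otimes \tfrac{\partial}{\partial y}) \otimes f Z \big)$ explicitly as $-(f_{xx}+f_{yy}) Z$, there is essentially nothing left to do at the level of the connection; the task is purely to pair this output with $Z$ and simplify.

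First I would substitute the Lemma's conclusion into the left-hand side of the desired identity, obtaining
\begin{equation*}
\big\langle \delta \nabla \big( ( \tfrac{\partial}{\partial x} \otimes \tfrac{\partial}{\partial x} + \tfrac{\partial}{\partial y} \otimes \tfrac{\partial}{\partial y}) \otimes f Z \big), \ Z \big\rangle = \big\langle -(f_{xx}+f_{yy}) Z, \ Z \big\rangle .
\end{equation*}
Because $-(f_{xx}+f_{yy})$ is a real-valued function on $\br^2$ (a scalar at each point), and the metric $\langle \, , \, \rangle$ is $C^{\infty}(\br^2)$-bilinear, this scalar factors out of the first slot, leaving $-(f_{xx}+f_{yy}) \, \langle Z, Z \rangle$. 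It then remains to evaluate $\langle Z, Z \rangle$.

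For the final step I would use that $\tfrac{\partial}{\partial x}$ and $\tfrac{\partial}{\partial y}$ form an orthonormal frame for the Euclidean metric on $\br^2$, so that for the constant vector field $Z = c_1 \tfrac{\partial}{\partial x} + c_2 \tfrac{\partial}{\partial y}$ one has $\langle Z, Z \rangle = c_1^2 + c_2^2$. Invoking the hypothesis $c_1^2 + c_2^2 = 1$ gives $\langle Z, Z \rangle = 1$, and hence the right-hand side collapses to $-(f_{xx}+f_{yy})$, as claimed.

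Strictly speaking there is no genuine obstacle here: the content of the statement is carried entirely by the Lemma, and the passage from the Lemma to the Corollary is the elementary observation that the Laplacian coefficient is a scalar that can be pulled through the metric pairing while the unit-length normalization of $Z$ disposes of the remaining factor $\langle Z, Z \rangle$. The only point requiring a moment's care is that $Z$ be taken with constant coefficients, so that it commutes past $\delta \nabla$ in the form the Lemma demands and so that the Euclidean orthonormality of the coordinate frame applies verbatim.
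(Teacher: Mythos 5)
Your proof is correct and follows exactly the route the paper intends: the corollary is an immediate consequence of the preceding lemma, obtained by pairing its conclusion $-(f_{xx}+f_{yy})Z$ with $Z$, pulling the scalar coefficient out of the metric by $C^\infty(\br^2)$-bilinearity, and using $\langle Z, Z\rangle = c_1^2+c_2^2 = 1$. The paper gives no separate argument for the corollary precisely because this is the whole content, so your write-up matches its approach.
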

The above lemma and corollary clearly generalize to $\br^n$.  

For an arbitrary Riemannian manifold $M$, the $(1, \, 3)$ curvature tensor of an affine connection $\nabla$ is given by
$R : \chi (M)^{\otimes 3} \to \chi (M)$, where for $X$, $Y$, $Z \in \chi (M)$,
$$  R(X \otimes Y \otimes Z) = \nabla_X \nabla_Y (Z) - \nabla_Y \nabla_X (Z) 
- \nabla_{[X, \, Y]}(Z), $$
using the sign convention from Sakai \cite[p. 33]{Sakai}.  We first state a general result that applies to any symmetric 
connection on a differentiable manifold.

\begin{lemma}
Let $M$ be a differentiable manifold with a symmetric connection $\nabla$.  For $X$, $Z \in \chi (M)$, 
we have
$$  \delta \nabla ( X \otimes X \otimes Z) = 
R(X \otimes Z \otimes X) - \nabla_{X} \nabla_{X} (Z) + 
 \nabla_{\nabla_{X} (X)} (Z).  $$
\end{lemma}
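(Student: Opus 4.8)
The plan is to start from the general coboundary formula \eqref{deltaDelta} and specialize it by setting $X_1 = X_2 = X$ and $X_3 = Z$. Two simplifications are immediate. Since $[X,\,X] = 0$, the term $\nabla_{[X_1,\,X_2]}(X_3)$ drops out entirely. Moreover, the first two bracket terms $-[X_1,\,\nabla_{X_2}(X_3)] + [X_2,\,\nabla_{X_1}(X_3)]$ become $-[X,\,\nabla_X(Z)] + [X,\,\nabla_X(Z)]$, which cancel. This leaves only three surviving terms,
$$  \delta\nabla(X\otimes X\otimes Z) = -[Z,\,\nabla_X(X)] - \nabla_{[X,\,Z]}(X) - \nabla_X([X,\,Z]),  $$
so the remaining work reduces to rewriting these using the hypothesis that $\nabla$ is symmetric.

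Next I would invoke the symmetry identity $\nabla_A(B) - \nabla_B(A) = [A,\,B]$ in two places. Applying it with $A = Z$ and $B = \nabla_X(X)$ converts $-[Z,\,\nabla_X(X)]$ into $-\nabla_Z\nabla_X(X) + \nabla_{\nabla_X(X)}(Z)$, which already produces the desired $\nabla_{\nabla_X(X)}(Z)$ summand. Applying it with $A = X$ and $B = Z$ rewrites $[X,\,Z] = \nabla_X(Z) - \nabla_Z(X)$, so that $-\nabla_X([X,\,Z]) = -\nabla_X\nabla_X(Z) + \nabla_X\nabla_Z(X)$; this supplies the $-\nabla_X\nabla_X(Z)$ summand. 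The middle term $-\nabla_{[X,\,Z]}(X)$ is untouched. Collecting everything, the specialized coboundary becomes
$$  \nabla_X\nabla_Z(X) - \nabla_Z\nabla_X(X) - \nabla_{[X,\,Z]}(X) - \nabla_X\nabla_X(Z) + \nabla_{\nabla_X(X)}(Z).  $$

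Finally, I would recognize the first three of these terms as exactly $R(X\otimes Z\otimes X) = \nabla_X\nabla_Z(X) - \nabla_Z\nabla_X(X) - \nabla_{[X,\,Z]}(X)$, the $(1,3)$ curvature tensor evaluated in the order dictated by the Sakai sign convention recorded above. The remaining two terms are already $-\nabla_X\nabla_X(Z) + \nabla_{\nabla_X(X)}(Z)$, which completes the identity. The computation is entirely formal once the initial substitution is made; the step I expect to demand the most care is the sign and argument bookkeeping, ensuring that the leftover $-\nabla_{[X,\,Z]}(X)$ is matched to the correct slot of $R$ (so that $R(X\otimes Z\otimes X)$ appears, rather than $R(Z\otimes X\otimes X)$ or a sign-flipped curvature term), and verifying that the symmetry identity is applied with the tensor factors in the right order.
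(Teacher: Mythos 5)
Your proposal is correct and follows essentially the same route as the paper's own proof: specialize \eqref{deltaDelta} at $X_1 = X_2 = X$, $X_3 = Z$, note the cancellation of the first two bracket terms and the vanishing of $\nabla_{[X,X]}(Z)$, then apply the symmetry identity $\nabla_A(B) - \nabla_B(A) = [A,B]$ to $-[Z,\nabla_X(X)]$ and to $-\nabla_X([X,Z])$, and finally assemble $\nabla_X\nabla_Z(X) - \nabla_Z\nabla_X(X) - \nabla_{[X,Z]}(X)$ into $R(X \otimes Z \otimes X)$ under the Sakai convention. Your sign and slot bookkeeping matches the paper's computation exactly, so there is nothing to correct.
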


\begin{proof}
 \begin{equation*}
 \begin{split} 
 \delta \nabla  & ( X \otimes X \otimes Z)  = 
- [Z, \, \nabla_{X}  (X)] -  \nabla_{ [ X, \, Z] } (X) - \nabla_X ( [ X, \, Z] )  \\
& =  - \nabla_Z \nabla_X (X)  + \nabla_{\nabla_{X} (X)} (Z) - \nabla_{[ X, \, Z]} (X) 
- \nabla_X \nabla_X (Z) + \nabla_X \nabla_Z (X)   \\
& = \nabla_X \nabla_Z (X) - \nabla_Z  \nabla_X (X)  - \nabla_{[ X, \, Z ]} ( X)
 - \nabla_X \nabla_X (Z) + \nabla_{\nabla_{X} (X)} (Z)  \\
& = R( X \otimes Z \otimes X ) - \nabla_X \nabla_X (Z) 
+ \nabla_{\nabla_{X} (X)} (Z).  
\end{split}
\end{equation*}
\end{proof}

Since a connection is a local operator, we may express $\delta \nabla$ in terms of local coordinates, 
where an expression for the Laplacian may be more identifiable.  
\begin{corollary} \label{Ricci-Laplacian}
Let $(x, \, U)$ be a coordinate chart for the $n$-dimensional Riemannian manifold $M$, let $Z \in \chi (M)$,
and let $\nabla$ be the Levi-Civita connection on $M$. Then
\begin{equation*}
\begin{split}
\delta \nabla & \Big( \sum_{i=1}^n \tfrac{\partial}{\partial x^i} \otimes \tfrac{\partial}{\partial x^i} \otimes Z \Big) = \\
& \sum_{i=1}^n  R \big(\tfrac{\partial}{\partial x^i} \otimes Z \otimes \tfrac{\partial}{\partial x^i} \big)
+ \sum_{i=1}^n \big( - \nabla_{\frac{\partial}{\partial x^i}} \nabla_{\frac{\partial}{\partial x^i} } (Z) 
+  \nabla_{\nabla_{ \frac{\partial}{\partial x^i} }(\frac{\partial}{\partial x^i})}(Z) \big). 
\end{split}
\end{equation*}
\end{corollary}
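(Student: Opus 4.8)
The plan is to obtain the corollary as an immediate consequence of the preceding lemma together with the $\br$-linearity of the coboundary. By construction $\delta\nabla$ is a $3$-cochain, i.e. an element of $\mathrm{Hom}_{\br}(\chi(M)^{\otimes 3}, \chi(M))$, and is therefore $\br$-linear in its tensor argument. Since $\sum_{i=1}^n \tfrac{\partial}{\partial x^i} \otimes \tfrac{\partial}{\partial x^i} \otimes Z$ is a finite sum of simple tensors, linearity allows me to distribute:
$$ \delta\nabla\Big(\sum_{i=1}^n \tfrac{\partial}{\partial x^i} \otimes \tfrac{\partial}{\partial x^i} \otimes Z\Big) = \sum_{i=1}^n \delta\nabla\Big(\tfrac{\partial}{\partial x^i} \otimes \tfrac{\partial}{\partial x^i} \otimes Z\Big). $$
The decisive structural feature is that each summand carries the \emph{same} coordinate vector field $\tfrac{\partial}{\partial x^i}$ in both the first and second tensor slots, which is precisely the shape $X \otimes X \otimes Z$ demanded by the lemma.

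Next I would apply the preceding lemma termwise with $X = \tfrac{\partial}{\partial x^i}$. This is legitimate because the Levi-Civita connection is symmetric, so the hypotheses of that lemma are satisfied, giving for each index $i$
$$ \delta\nabla\Big(\tfrac{\partial}{\partial x^i} \otimes \tfrac{\partial}{\partial x^i} \otimes Z\Big) = R\Big(\tfrac{\partial}{\partial x^i} \otimes Z \otimes \tfrac{\partial}{\partial x^i}\Big) - \nabla_{\frac{\partial}{\partial x^i}} \nabla_{\frac{\partial}{\partial x^i}} (Z) + \nabla_{\nabla_{\frac{\partial}{\partial x^i}}(\frac{\partial}{\partial x^i})}(Z). $$
Summing over $i$ and separating the curvature contributions from the second-order contributions reproduces the asserted identity verbatim.

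The one point that genuinely merits a remark, flagged just before the statement, is that $\nabla$ (and hence $\delta\nabla$) is a local operator: although the left-hand side is written in terms of the global cochain $\delta\nabla$, the coordinate fields $\tfrac{\partial}{\partial x^i}$ are defined only over the chart $U$, and it is the locality of $\nabla$ that makes the evaluation, and thus the identity, well defined pointwise on $U$. I do not expect any real obstacle here; the content of the corollary is organizational rather than computational, its purpose being to exhibit the right-hand side as a sum of a curvature (Ricci-type) contraction $\sum_i R(\tfrac{\partial}{\partial x^i} \otimes Z \otimes \tfrac{\partial}{\partial x^i})$ and a connection Laplacian acting on $Z$, so that $\delta\nabla$ can subsequently be matched with the Laplace--Beltrami operator.
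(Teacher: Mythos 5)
Your proposal is correct and is exactly the argument the paper intends: the corollary is stated without a separate proof precisely because it follows from the preceding lemma (applied termwise with $X = \tfrac{\partial}{\partial x^i}$, justified by the symmetry of the Levi-Civita connection) together with $\br$-linearity of $\delta\nabla$, and your remark on locality matches the paper's own comment that ``a connection is a local operator.''
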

Now,  $\Delta (Z):= \sum_{i=1}^n ( - \nabla_{\frac{\partial}{\partial x^i}} \nabla_{\frac{\partial}{\partial x^i} } (Z) 
+  \nabla_{\nabla_{ \frac{\partial}{\partial x^i} }(\frac{\partial}{\partial x^i})}(Z) )$ is the Laplace-Beltrami operator 
applied to $Z$.  We interpret $R$ in terms of Ricci curvature.  
Let $\langle \ \rangle$ denote the metric on $M$, let $p \in M$, and let $z_p$, $y_p \in T_p (M)$ be tangent
vectors to $M$ at $p$.  For an orthonormal basis $\{ e_i \}$, $i = 1$, 2, $\ldots \, $, $n$, of $T_p (M)$, the
Ricci $(0, \, 2)$ tensor is given by \cite[p. 44]{Sakai}
$$  {\rm{Ric}}_p (z_p , \, y_p ) = \sum_{i = 1}^n \langle R(e_i, \, z_p, \, y_p), \, e_i \rangle =
- \sum_{i = 1}^n \langle R(e_i, \, z_p, \, e_i), \, y_p \rangle .  $$

\begin{lemma}  \label{Ricci=L(Z)}
Let $(x, \, U)$ be a chart for the $n$-dimensional Riemannian manifold $(M, \, \langle \ \rangle)$ such that
$e_i = \frac{\p}{\p x^i} \vert_p$, $ i = 1, \, 2, \, \ldots  \, , \,$ $n$,  
is an orthonormal basis for $T_p (M)$, $p \in U$.  Let $Z \in \chi (M)$, and
let $\nabla$ denote the Levi-Civita connection on $M$.  If
$$  \delta \nabla \big( \sum_{i=1}^n \tfrac{\p}{\p x^i} \otimes \tfrac{\p}{\p x^i} \otimes Z \big) = 0  $$ 
on $U$, then
$$   {\rm{Ric}}_p (Z, \, y) = \langle \Delta (Z), \, y \rangle_p  $$   
for all $y \in T_p (M)$.
\end{lemma}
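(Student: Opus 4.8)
The plan is to read the result directly off Corollary \ref{Ricci-Laplacian} together with the second expression in the displayed formula for the Ricci tensor; the only genuine content is a careful evaluation at the point $p$, so no second-order computation is required.

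First I would invoke Corollary \ref{Ricci-Laplacian}, which writes $\delta \nabla \big( \sum_{i=1}^n \tfrac{\p}{\p x^i} \otimes \tfrac{\p}{\p x^i} \otimes Z \big)$ as the sum of the curvature term $\sum_{i=1}^n R\big( \tfrac{\p}{\p x^i} \otimes Z \otimes \tfrac{\p}{\p x^i} \big)$ and the Laplace-Beltrami term $\Delta(Z)$, an identity of vector fields valid on all of $U$. The vanishing hypothesis then forces
$$ \Delta (Z) = - \sum_{i=1}^n R\big( \tfrac{\p}{\p x^i} \otimes Z \otimes \tfrac{\p}{\p x^i} \big) $$
on $U$, and in particular at the point $p$.

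Next I would pair both sides with an arbitrary $y \in T_p (M)$ using the metric at $p$. Because the curvature operator $R$ is $C^{\infty}(M)$-linear in each of its three arguments, its value at $p$ depends only on the values of the vector fields there; thus, evaluating and substituting the chart hypothesis $e_i = \tfrac{\p}{\p x^i} \vert_p$ yields
$$ \langle \Delta (Z), \, y \rangle_p = - \sum_{i=1}^n \langle R(e_i, \, Z, \, e_i), \, y \rangle_p . $$
The right-hand side is precisely the second expression for ${\rm{Ric}}_p (Z, \, y)$ in the Ricci formula quoted above, taking $z_p$ to be the value of $Z$ at $p$, so I conclude ${\rm{Ric}}_p (Z, \, y) = \langle \Delta (Z), \, y \rangle_p$ for all $y \in T_p (M)$, as claimed.

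The step most likely to demand care is the passage to $p$: the orthonormality $e_i = \tfrac{\p}{\p x^i} \vert_p$ is only a pointwise condition, so the Ricci formula, which presupposes an orthonormal frame, may be applied solely at $p$. It is exactly the tensorial (pointwise) nature of $R$ that legitimizes evaluating the curvature sum at $p$ without any neighborhood-orthonormality of the coordinate frame, while $\Delta(Z)$ enters only through the already-established identity of vector fields.
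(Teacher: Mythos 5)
Your proposal is correct and follows essentially the same route as the paper: invoke Corollary \ref{Ricci-Laplacian} to convert the vanishing hypothesis into the identity $\Delta(Z) = -\sum_{i=1}^n R\big(\tfrac{\p}{\p x^i} \otimes Z \otimes \tfrac{\p}{\p x^i}\big)$, pair with an arbitrary $y \in T_p(M)$, and apply the orthonormal-frame formula for $\mathrm{Ric}_p$ at the point $p$. Your explicit appeal to the $C^{\infty}(M)$-linearity of $R$ to justify the pointwise evaluation is a detail the paper leaves implicit, but it is the same argument.
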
  
\begin{proof}
Since  $\delta \nabla ( \sum_{i=1}^n \frac{\p}{\p x^i} \otimes \frac{\p}{\p x^i} \otimes Z ) = 0$, we have
$ - \sum_{i=1}^n R ( \frac{\p}{\p x^i} \otimes Z \otimes \frac{\p}{\p x^i} ) = \Delta (Z)$.  
Thus, for any $y \in T_p (M)$, we have
$$ - \sum_{i=1}^n \langle R ( \tfrac{\p}{\p x^i} \otimes Z \otimes \tfrac{\p}{\p x^i} ), \, y \rangle_p = 
\langle \Delta (Z), \, y \rangle_p .  $$
Since $\{ \frac{\p}{\p x^i} \vert_p \}$ is an orthonormal basis for $T_p (M)$, we have
${\rm{Ric}}_p (Z, \, y) = \langle \Delta (Z), \, y \rangle_p$ at the point $p$.
\end{proof}

\begin{theorem}  \label{eigenfunction}
With $(M, \, \langle \  \rangle)$ and $\nabla$ as in Lemma \eqref{Ricci=L(Z)}, suppose that  $(x, \, U)$ 
is a geodesic normal coordinate chart around $p \in U$ with
$$  \delta \nabla \big( \sum_{i=1}^n \tfrac{\p}{\p x^i} \otimes \tfrac{\p}{\p x^i} \otimes f \tfrac{\p}{\p x^j} \big) = 0  $$ 
on $U$ for $j = 1$, $2$, $\ldots \, $, $n$, and a given $f \in C^{\infty} (M)$.  Then
$ \Delta_p f = (\frac{s}{n}) f(p)$, where $s$ denotes the scalar curvature of $M$, and $\Delta$ is the 
Laplace-Beltrami operator (applied to $f$).   
\end{theorem}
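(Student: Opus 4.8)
The plan is to apply Lemma~\ref{Ricci=L(Z)} to the vector field $Z = f\,\tfrac{\p}{\p x^j}$ for each index $j$, to identify the Laplace--Beltrami term with the scalar Laplacian $\Delta_p f$ at the centre of the normal chart, and then to sum over $j$ so that the metric trace of the Ricci tensor produces the scalar curvature $s$.

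Fix $j$. The standing hypothesis $\delta\nabla\big(\sum_i \tfrac{\p}{\p x^i}\otimes \tfrac{\p}{\p x^i}\otimes f\tfrac{\p}{\p x^j}\big)=0$ is exactly the hypothesis of Lemma~\ref{Ricci=L(Z)} with $Z = f\tfrac{\p}{\p x^j}$, so that lemma gives, for all $y\in T_p(M)$,
\[
\mathrm{Ric}_p\big(f\tfrac{\p}{\p x^j},\,y\big)=\big\langle \Delta\big(f\tfrac{\p}{\p x^j}\big),\,y\big\rangle_p .
\]
Taking $y=\tfrac{\p}{\p x^j}\big|_p=e_j$ and using that $\mathrm{Ric}_p$ is bilinear at the point $p$, where $f\tfrac{\p}{\p x^j}=f(p)\,e_j$, turns the left-hand side into $f(p)\,\mathrm{Ric}_p(e_j,e_j)$.

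Next I would compute the right-hand side in the geodesic normal chart. Since $(x,U)$ is geodesic normal about $p$, the Christoffel symbols vanish at $p$, so $\nabla_{\frac{\p}{\p x^i}}\tfrac{\p}{\p x^i}\big|_p=0$ and $\nabla_{\frac{\p}{\p x^i}}\tfrac{\p}{\p x^j}\big|_p=0$. Substituting $Z=f\tfrac{\p}{\p x^j}$ into $\Delta(Z)=\sum_i\big(-\nabla_{\frac{\p}{\p x^i}}\nabla_{\frac{\p}{\p x^i}}Z+\nabla_{\nabla_{\frac{\p}{\p x^i}}(\frac{\p}{\p x^i})}Z\big)$, the last summand drops at $p$, and the first-order cross terms $2(\tfrac{\p}{\p x^i}f)\,\nabla_{\frac{\p}{\p x^i}}\tfrac{\p}{\p x^j}$ drop at $p$, reducing $\Delta\big(f\tfrac{\p}{\p x^j}\big)\big|_p$ to $-\sum_i \tfrac{\p^2 f}{\p(x^i)^2}(p)\,\tfrac{\p}{\p x^j}=(\Delta_p f)\,\tfrac{\p}{\p x^j}$, where $\Delta_p f=-\sum_i \tfrac{\p^2 f}{\p(x^i)^2}(p)$ is the scalar Laplace--Beltrami operator at $p$, compatible with the sign convention of $\Delta$ on vector fields. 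Pairing with the unit vector $\tfrac{\p}{\p x^j}\big|_p$ then gives $\big\langle\Delta(f\tfrac{\p}{\p x^j}),\tfrac{\p}{\p x^j}\big\rangle_p=\Delta_p f$.

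Combining the two sides yields $\Delta_p f=f(p)\,\mathrm{Ric}_p(e_j,e_j)$ for each $j=1,\dots,n$; summing over $j$ and using that $s=\sum_j \mathrm{Ric}_p(e_j,e_j)$ is the metric trace of the Ricci tensor in the orthonormal frame $\{e_j\}$ gives $n\,\Delta_p f=s\,f(p)$, hence $\Delta_p f=(s/n)f(p)$. I expect the main obstacle to be the reduction in the previous paragraph: the vanishing of the Christoffel symbols at $p$ immediately kills the first-order cross terms and the $\nabla_{\nabla_{\frac{\p}{\p x^i}}(\frac{\p}{\p x^i})}Z$ term, but the genuinely delicate contribution is that of the second covariant derivatives of the frame, $\nabla_{\frac{\p}{\p x^i}}\nabla_{\frac{\p}{\p x^i}}\tfrac{\p}{\p x^j}\big|_p$. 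Justifying that these do not disturb the identification of $\big\langle\Delta(f\tfrac{\p}{\p x^j}),\tfrac{\p}{\p x^j}\big\rangle_p$ with $\Delta_p f$ at the centre of the normal chart is the crux of the argument, and it is precisely here that the geodesic normal hypothesis must be invoked with care.
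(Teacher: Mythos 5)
Your proposal follows the paper's proof essentially step for step: apply Lemma \eqref{Ricci=L(Z)} with $Z = f\tfrac{\p}{\p x^j}$ and $y = \tfrac{\p}{\p x^j}\vert_p$, pull $f(p)$ out of the curvature term (you do this via pointwise bilinearity of ${\rm{Ric}}_p$, the paper via $C^{\infty}(M)$-linearity of $R$; these are the same move), identify $\langle \Delta (f \tfrac{\p}{\p x^j}), \tfrac{\p}{\p x^j}\rangle_p$ with the scalar Laplacian of $f$ at $p$, and sum over $j$ so the trace of ${\rm{Ric}}_p$ produces $s_p$. Your sign bookkeeping ($\Delta_p f = -\sum_i \tfrac{\p^2 f}{\p (x^i)^2}(p)$, matching the sign convention in the definition of $\Delta$ on vector fields) is in fact more careful than the paper's. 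The one step you leave open --- the contribution of the second covariant derivatives of the frame --- is precisely the step the paper passes over in silence: its proof simply asserts that in normal coordinates $\Delta_p ( f \tfrac{\p}{\p x^j} ) = \sum_i ( \tfrac{\p^2 f}{\p x^i \p x^i} )_p \tfrac{\p}{\p x^j}$, with no mention of the frame term.

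Your worry at that step is justified, and it is a genuine gap: the term does not vanish, so the reduction cannot be completed as you hope. Abbreviate $\p_i := \tfrac{\p}{\p x^i}$. Vanishing of the Christoffel symbols at $p$ kills $\nabla_{\p_i}\p_j \vert_p$, but
$$ \sum_{i=1}^n \nabla_{\p_i}\nabla_{\p_i}\p_j \Big\vert_p \;=\; \sum_{i,k} \p_i \Gamma^k_{ij}(p) \, \p_k $$
involves \emph{first derivatives} of the Christoffel symbols, and in a geodesic normal chart these are curvature, not zero. Differentiating the geodesic equation along the lines through $p$ gives the symmetrized identity $\p_l\Gamma^k_{ij} + \p_i\Gamma^k_{jl} + \p_j\Gamma^k_{li} = 0$ at $p$; combining this with $R(\p_a \otimes \p_b \otimes \p_c)\vert_p = \sum_k ( \p_a\Gamma^k_{bc} - \p_b\Gamma^k_{ac} )(p)\,\p_k$ yields
$$ \sum_{i=1}^n \nabla_{\p_i}\nabla_{\p_i}\p_j \Big\vert_p \;=\; \tfrac{1}{3} \sum_{i=1}^n R( \p_i \otimes \p_j \otimes \p_i )\Big\vert_p ,
\qquad
\Big\langle \sum_{i=1}^n \nabla_{\p_i}\nabla_{\p_i}\p_j \Big\vert_p , \ \p_j \Big\rangle_p \;=\; -\tfrac{1}{3}\,{\rm{Ric}}_p (\p_j, \p_j), $$
which need not vanish (on a round sphere ${\rm{Ric}}_p(\p_j,\p_j) = n-1$). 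Consequently $\langle \Delta ( f\p_j ), \p_j \rangle_p = \Delta_p f + \tfrac{f(p)}{3}\,{\rm{Ric}}_p(\p_j,\p_j)$, and feeding this into Lemma \eqref{Ricci=L(Z)} gives $\tfrac{2}{3} f(p)\, {\rm{Ric}}_p(\p_j,\p_j) = \Delta_p f$ for each $j$; summing over $j$ then produces $\Delta_p f = \tfrac{2 s_p}{3 n} f(p)$ --- still an eigenfunction equation, but with constant $\tfrac{2s}{3n}$ rather than $\tfrac{s}{n}$. So the identification $\Delta( f \p_j )\vert_p = (\Delta_p f)\, \p_j$ is not a delicate point to be handled ``with care''; it fails outright unless the diagonal Ricci components ${\rm{Ric}}_p(\p_j,\p_j)$ vanish. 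A complete argument must either carry this Ricci correction through (which changes the constant in the theorem's conclusion) or add a hypothesis that kills it. Since the paper's own proof makes the identical silent identification, you have reproduced its argument faithfully, and in doing so you located its actual weak point.
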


\begin{proof}
In geodesic normal coordinates $\langle \frac{\p}{\p x^i}, \, \frac{\p}{\p x^j} \rangle_p = \delta_{ij}$, and 
$\nabla_{\frac{\p}{\p x^i}\vert_p} ( \frac{\p}{\p x^j} ) = 0$.  Choose $y = \frac{\p}{\p x^j} \vert_p$ in Lemma
\eqref{Ricci=L(Z)}. Then
$$  - \sum_{i=1}^n \langle R( \tfrac{\p}{\p x^i} \otimes f \tfrac{\p}{\p x^j }\otimes \tfrac{\p}{\p x^i}), \, 
   \tfrac{\p}{\p x^j} \rangle_p = \langle \Delta (f \tfrac{\p}{\p x^j}), \, \tfrac{\p}{\p x^j} \rangle_p .  $$
In normal coordinates, 
$\Delta_p (f \frac{\p}{\p x^j}) = \sum_{i=1}^n ( \frac{\p^2 f}{\p x^i \, \p x^i} )_p \frac{\p}{\p x^j}$.  
Thus, $\langle \Delta (f \frac{\p}{\p x^j}), \, \frac{\p}{\p x^j} \rangle_p = \Delta_p (f)$.  
Since the curvature tensor $R$ is linear over $C^{\infty}(M)$, we have
$$  -f \sum_{i=1}^n \langle R ( \tfrac{p}{\p x^i} \otimes \tfrac{\p}{\p x^j} \otimes \tfrac{\p}{\p x^i} ), \,
   \tfrac{\p}{\p x^j} \rangle_p = \Delta_p (f) .  $$
It follows that
\begin{equation}
\begin{split}
& f (p) \cdot  {\rm{Ric}}_p ( \tfrac{\p}{\p x^j}, \, \tfrac{\p}{\p x^j} ) = \Delta_p (f)  \\
& f (p) \cdot s_p = f(p) \sum_{j=1}^n {\rm{Ric}}_p ( \tfrac{\p}{\p x^j}, \, \tfrac{\p}{\p x^j} ) = n  \cdot \Delta_p (f) \\
& \Delta_p (f) = ( \tfrac{s_p}{n} ) f(p) .  
\end{split}
\end{equation} 
If the above holds for all $p \in M$, then globally $\Delta (f) = (\frac{s}{n}) f$.  If furthermore $M$ is a manifold 
of constant scalar curvature $s_p \equiv c$, then $\Delta (f) = (\frac{c}{n}) f$, and $f$ is an eigenfunction of the 
Laplacian with eigenvalue $\frac{c}{n}$.  
\end{proof}

\section{Leibniz Cohomology of the Affine Orthogonal Lie Algebra}

We offer a calculation of $HL^*(\h ; \, \h)$, where $\h$ is a certain family of vector fields on $\br^n$ isomorphic to 
the affine orthogonal Lie algebra, $n \geq 3$.  Consider the standard coordinates on $\br^n$ given by 
$(x_1, \, x_2, \, \ldots \, , x_n)$ with unit vector fields $\pari$ parallel to the $x_i$ axes.  Let
$\alpha_{ij} = x_i \parj - x_j \pari$.  Then $\{ \alpha_{ij} \}$, $1 \leq i < j \leq n$, is a vector space basis for a Lie 
algebra  isomorphic to ${\frak{so}}(n)$.  Let $J_n$ be the $\br$ vector space spanned by $\{ \pari \}$, $i = 1$, 2, $
\ldots \, ,$ $n$.   Then $J_n$ is an Abelian Lie algebra.  Let $\h$ be the Lie algebra with basis given by the union of
$\{ \alpha_{ij} \}$, $1 \leq i < j \leq n$, and $\{ \pari \}$, $i = 1$, 2, $\ldots \, ,$ $n$.  Then $J_n$ is an ideal of $\h$ 
and there is a short exact sequence of Lie algebras
$$  0 \longrightarrow J_n \longrightarrow \h \longrightarrow \frak{so}(n) \longrightarrow 0  $$
with $\h / J_n \simeq \son$.  We use results from invariant theory \cite{Biyogmam}, where $HL^* (\h ; \, \br)$ is computed, to 
help in the calculation of $HL^* (\h ; \, \h)$.  In low dimensions $HL^1 (\h ; \, \h)$ and $HL^2 (\h ; \, \h)$  are
generated by a ``metric class"  $I : \h \to \h$ and an ``area class" $\rho : \h^{\otimes 2} \to \h$ respectively, where
\begin{align*}
& I \Big( \pari \Big) = \pari, \ \ \ I (\alpha_{ij}) = 0, \\
& \rho \Big( \pari \otimes \parj \Big) = \alpha_{ij}, \ \ \ \rho (g_1 \otimes g_2) = 0 \ \ {\rm{if}}  \ \ g_i \in \son \, .
\end{align*}
Higher dimensions of $HL^* (\h ; \, \h )$ contain echoes of these classes against a tensor algebra.  Let 
$\alpha^*_{ij}$ be the dual of $\alpha_{ij}$ with respect to the basis $\{ \alpha_{ij} \} \cup \{ \pari \}$ of
$\h$, and let $dx^i$ be the dual of $\pari$.  Let
$$  \gamma^*_n = \sum_{1 \leq i < j \leq n} (-1)^{i+j-1} \alpha^*_{ij} \otimes dx^1 \wedge  dx^2 \wedge 
\ldots \widehat{d}x^i \ldots \widehat{d}x^j \ldots \wedge dx^n \, . $$
The element $\gamma^*_n : \h \otimes J_n^{\wedge (n-2)} \to \br$ may be viewed as a factored volume form, which 
by skew-symmetry extends to $\h \otimes \h^{\otimes (n-2)} \to \br$.  In subsection $\S$ 3.3 we prove that
$$  HL^* (\h ; \, \h ) \simeq \langle I, \, \rho \rangle \otimes T( \gamma^*_n ),  $$
where $\langle I, \, \rho \rangle$ is the real vector space with basis $\{ I, \, \rho \}$ and
$T( \gamma^*_n ) := \sum_{q \geq 0} \langle \gamma^*_n \rangle ^{\otimes q}$ is the tensor algebra on the class
of $\gamma^*_n$.  

\subsection{The Pirashvili Spectral Sequence}

In this subsection we outline the cohomological version of the Pirashvili spectral sequence 
\cite{Pirashvili} for computing the Leibniz cohomology of a Lie algebra $\frak{g}$ with coefficients in a 
$\frak{g}$-module $V$, denoted $HL^*(\frak{g}; \, V)$.  Suppose that both $\frak{g}$ and $V$ are $k$-modules
for a commutative ring $k$.  We use the convention that $\frak{g}$ acts on the left of $V$, meaning that
for $g$, $h \in \frak{g}$ and $x \in V$, we have $g(hx) - h(gx) = [g, \, h](x)$.  Then $HL^*(\frak{g}; \, V)$ is the 
cohomology of the cochain complex $CL^n(\frak{g}; \, V) = {\rm{Hom}}_k ( \frak{g}^{\otimes n}, \, V)$, $n = 0$, 
1, 2, 3, $\ldots$ .  For $f \in CL^n(\frak{g}; V)$, $\delta f \in CL^{n+1}(\frak{g}; \, V)$ is given by Equation
\eqref{coboundary}.  If in Equation \eqref{coboundary} the tensor product is replaced with the exterior product, then
we have a cochain complex for $H^*_{\rm{Lie}}(\frak{g}; \, V)$, the Lie-algebra cohomology of $\frak{g}$ with coefficients
in $V$.  Using Pirashvili's grading, let $\pi_{\rm{rel}} : \frak{g}^{\otimes (n+2)} \to \frak{g}^{\wedge (n+2)}$, $n \geq 0$,
be the projection
$$  \pi_{\rm{rel}}(g_1 \otimes g_2 \otimes \ldots \otimes g_{n+2}) = g_1 \wedge g_2 \wedge \ldots \wedge g_{n+2} . $$
There is an induced map on cohomology $\pi^*_{\rm{rel}} : H^*_{\rm{Lie}}(\frak{g}; \, V) \to HL^*(\frak{g}; \, V)$, and we 
study the relative groups.  Let 
$$ C^n_{\rm{rel}} = {\rm{Coker}}\big[ {\rm{Hom}}(\frak{g}^{\wedge (n+2)}, \, V ) 
\overset{\pi_{\rm{rel}}\; }{\longrightarrow} {\rm{Hom}}(\frak{g}^{\otimes (n+2)}, \, V ) \big], \ \ \ 
n = 0, \ 1, \ 2, \ \ldots \ . $$ 
Then $C^*_{\rm{rel}}$ is a cochain complex with cohomology denoted by $H^*_{\rm{rel}}(\frak{g}; \, V)$.  
It follows quickly that $H^0_{\rm{Lie}}(\frak{g}; V) \simeq HL^0( \frak{g}; \, V)$ and 
$H^1_{\rm{Lie}}(\frak{g}; V) \simeq HL^1( \frak{g}; \, V)$.  For higher dimensions there is a 
``Lie-to-Leibniz" long exact sequence:
\begin{equation} \label{H-rel}
\begin{split}
0 & \lra H^2_{\rm{Lie}} (\frak{g}; \, V) \overset{\pi^*_{\rm{rel \;}}}{\lra} HL^2(\frak{g}; \, V) \lra H^0_{\rm{rel}}(\frak{g}; \, V)
\overset{c_{\rm{rel \;}}}{\lra} H^3_{\rm{Lie}} (\frak{g}; \, V) \lra  \ldots \\
& \lra H^n_{\rm{rel}}( \frak{g}; \, V) \overset{c_{\rm{rel \;}}}{\lra} H^{n+3}_{\rm{Lie}} (\frak{g}; \, V)
\overset{\pi^*_{\rm{rel \;}}}{\lra} HL^{n+3}(\frak{g}; \, V) \lra H^{n+1}_{\rm{rel}}(\frak{g}; \, V) \lra \ldots \ .
\end{split}
\end{equation}
Above, $c_{\rm{rel}}$ is the connecting homomorphism.  The Pirashvili spectral sequence arises from a filtration  of 
$C^*_{\rm{rel}}$ and converges to $H^*_{\rm{rel}}(\frak{g}; \, V)$.  

First we describe $H^*_{\rm{Lie}}(\frak{g}; \, \frak{g}')$, the Lie-algebra cohomology of $\frak{g}$ with coefficients in 
the coadjoint representation $\frak{g}' = {\rm{Hom}}_k (\frak{g}, \, k)$.  For $\varphi \in \frak{g}'$ and $g \in \frak{g}$, 
the left action of $\frak{g}$ on $\frak{g}'$ is given by 
$$  (g \varphi ) (x) =  \varphi ([x, \, g]), \ \ \ x \in \frak{g} .  $$
Then $H^*_{\rm{Lie}}( \frak{g}; \, \frak{g}')$ is the cohomology of the complex 
${\rm{Hom}}_k ( \frak{g}^{\wedge *}, \, \frak{g}')$.
In general $H^*_{\rm{Lie}}( \frak{g} ; \, \frak{g}')$ is not isomorphic to $H^*_{\rm{Lie}}( \frak{g} ; \, \frak{g})$.
However, $H^*_{\rm{Lie}}( \frak{g}; \, \frak{g}')$ can be computed from the isomorphic complex
${\rm{Hom}}_k ( \frak{g} \otimes \frak{g}^{ \wedge *}, \, k)$, which we now describe.  Define 
$d : \frak{g} \otimes \frak{g}^{\wedge (n+1)} \to \frak{g} \otimes \frak{g}^{\wedge n} $, $n = 0$, 1, 2, $\ldots$ , by
\begin{equation*}
\begin{split}
 d & (x \otimes g_1 \wedge g_2 \wedge \ldots \wedge g_{n+1}) = 
 \sum_{i=1}^{n+1} (-1)^{i+1} [x, \, g_i] \otimes g_1 \wedge \ldots \widehat{g}_i \ldots \wedge g_{n+1} \\
 & + \sum_{1 \leq i < j \leq n+1} (-1)^{j+1} x \otimes g_1 \wedge \ldots g_{i-1} \wedge [g_i , \, g_j] \wedge
 g_{i+1} \ldots  \widehat{g}_j  \ldots \wedge g_{n+1} .  
\end{split}
\end{equation*}
The Lie-algebra {\em homology} groups $H^{\rm{Lie}}_* (\frak{g}; \, \frak{g})$ are computed from the complex
$(\frak{g} \otimes \frak{g}^{\wedge *}, \, d)$.  Let
$$  d^* : {\rm{Hom}}_k ( \frak{g} \otimes \frak{g}^{ \wedge n}, \, k) \to 
{\rm{Hom}}_k ( \frak{g} \otimes \frak{g}^{ \wedge (n +1)}, \, k)  $$
be the ${\rm{Hom}}_k$ dual of $d$, and let $\alpha \in {\rm{Hom}}_k (\frak{g}^{\wedge n}, \, \frak{g}')$.  There
is a cochain isomorphism
$$  \Phi : {\rm{Hom}}_k ( \frak{g}^{\wedge n}, \, \frak{g}') \to
{\rm{Hom}}_k ( \frak{g} \otimes \frak{g}^{\wedge n}, \, k)  $$ 
given by
$$ \Phi (\alpha) (x \otimes g_1 \wedge g_2 \wedge \ldots \wedge g_n) =
(-1)^n \alpha (g_1 \wedge g_2 \wedge \ldots \wedge g_n) (x),  $$
where $x$, $g_1$, $g_2$, $\ldots \, $, $g_n \in \frak{g}$.  The reader may verify that 
$\Phi (\delta \alpha) = d^* ( \Phi \alpha)$.  
\begin{lemma} \label{univ-coeff-thm}
When $k$ is a field $\bf{F}$, we have
$$  H^*_{\rm{Lie}}(\frak{g}; \, \frak{g}') \simeq 
{\rm{Hom}}_{\bf{F}} \big( H^{\rm{Lie}}_*( \frak{g}; \, \frak{g}), \, \bf{F} \big) . $$
\end{lemma}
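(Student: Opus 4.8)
The plan is to leverage the cochain isomorphism $\Phi$ just constructed, which reduces the computation of $H^*_{\rm{Lie}}(\frak{g}; \, \frak{g}')$ to the cohomology of a complex that is literally the linear dual of the chain complex computing the Lie-algebra homology $H^{\rm{Lie}}_*(\frak{g}; \, \frak{g})$, and then to invoke the universal coefficient theorem in the particularly clean form it takes over a field. First I would note that since $\Phi(\delta \alpha) = d^*(\Phi \alpha)$ and $\Phi$ is a bijection in each degree, $\Phi$ is an isomorphism of cochain complexes. Consequently $H^*_{\rm{Lie}}(\frak{g}; \, \frak{g}')$ is isomorphic to the cohomology of the complex $\big( {\rm{Hom}}_k(\frak{g} \otimes \frak{g}^{\wedge *}, \, k), \, d^* \big)$.

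Next I would observe that, by the very definition of $d^*$ as the ${\rm{Hom}}_k(-, \, k)$-dual of $d$, this latter complex is exactly ${\rm{Hom}}_k(C_*, \, k)$, the dual of the chain complex $C_* = (\frak{g} \otimes \frak{g}^{\wedge *}, \, d)$ whose homology computes $H^{\rm{Lie}}_*(\frak{g}; \, \frak{g})$. Thus the entire problem reduces to comparing the cohomology of a dualized complex with the dual of its homology. Specializing to $k = \mathbf{F}$ a field, I would exploit the fact that every $\mathbf{F}$-vector space is free, so every short exact sequence of $\mathbf{F}$-vector spaces splits and the functor ${\rm{Hom}}_{\mathbf{F}}(-, \, \mathbf{F})$ is exact. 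Applying this exactness to the two short exact sequences $0 \lra B_n \lra Z_n \lra H_n(C_*) \lra 0$ and $0 \lra Z_n \lra C_n \lra B_{n-1} \lra 0$ (cycles, boundaries, and homology in degree $n$) shows that dualization commutes with passage to homology, yielding $H^n\big({\rm{Hom}}_{\mathbf{F}}(C_*, \, \mathbf{F})\big) \simeq {\rm{Hom}}_{\mathbf{F}}\big(H_n(C_*), \, \mathbf{F}\big)$. Chaining this with the identifications of the previous paragraph gives the stated isomorphism.

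I expect the only real content to sit in the last step, the interchange of cohomology and dualization, and even that is essentially automatic once one is over a field: the $\mathrm{Ext}^1$ obstruction present in the integral universal coefficient theorem vanishes because $\mathbf{F}$-modules are projective. The single point that warrants a moment's care, given that $\frak{g}$ here is a generally infinite-dimensional Lie algebra of vector fields, is that the splitting of the cycle--boundary sequences requires no finiteness hypothesis — every subspace of an $\mathbf{F}$-vector space admits a complement regardless of dimension — so the argument applies verbatim. Bookkeeping of the sign $(-1)^n$ built into $\Phi$ is harmless, since it affects neither the property of $\Phi$ being an isomorphism of complexes nor the final identification.
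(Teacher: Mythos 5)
Your proposal is correct and follows exactly the route the paper takes: the paper's own proof is the one-line remark that the result ``follows from the existence of $\Phi$ and the universal coefficient theorem,'' and your argument is precisely that, with the details filled in. Your expansion of the field-case UCT (exactness of ${\rm{Hom}}_{\bf{F}}(-,\,{\bf{F}})$ applied to the cycle--boundary sequences, with no finiteness hypothesis needed) is a faithful and accurate unpacking of what the paper leaves implicit.
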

\begin{proof}
The proof follows from the existence of $\Phi$ and the universal coefficient theorem.
\end{proof}

Consider now the projection
\begin{equation*}
\begin{split}
&  \pi_R : \frak{g} \otimes \frak{g}^{\wedge (n+1)} \to \frak{g}^{\wedge (n+2)}, \ \ \ n \geq 0 , \\
& \pi_R (g_0 \otimes g_1 \wedge g_2 \wedge \ldots \wedge g_{n+1}) =
g_0 \wedge g_1 \wedge g_2 \wedge \ldots \wedge g_{n+1} .
\end{split}
\end{equation*}
We work over a field $\bf{F}$, although many constructions are valid for an arbitrary commutative ring.  
Let
$$ CR^n  (\frak{g}) = {\rm{Coker}}\big[ {\rm{Hom}}_{\bf{F}}(\frak{g}^{\wedge (n+2)}, \, {\bf{F}} ) 
\overset{\pi^*_{R}\; }{\longrightarrow} {\rm{Hom}}_{\bf{F}}(\frak{g} \otimes \frak{g}^{\wedge (n+1)}, {\bf{F}} ) \big].  $$
Then $CR^*( \frak{g} )$ is a cochain complex whose cohomology is denoted $HR^* ( \frak{g} )$.  We have
$H^1_{\rm{Lie}}( \frak{g}; \, {\bf{F}} )\simeq H^0_{\rm{Lie}} ( \frak{g}; \, \frak{g}' )$, and there is a
``Lie-coadjoint" long exact sequence:  
\begin{equation} \label{HR-exact}
\begin{split}
0 & \lra H^2_{\rm{Lie}} (\frak{g}; \, {\bf{F}}) \overset{\pi^*_R}{\lra} H^1_{\rm{Lie}} (\frak{g}; \, \frak{g}') \lra HR^0 (\frak{g})
\overset{c_R}{\lra} H^3_{\rm{Lie}} (\frak{g}; \, {\bf{F}} ) \lra  \ldots \\
& \lra HR^m ( \frak{g} ) \overset{c_R}{\lra} H^{m+3}_{\rm{Lie}} (\frak{g}; \, {\bf{F}})
\overset{\pi^*_R}{\lra} H^{m+2}_{\rm{Lie}}(\frak{g}; \, \frak{g}' ) \lra HR^{m+1}( \frak{g} ) \lra \ldots \ .
\end{split}
\end{equation}
where $c_R$ is the connecting homomorphism.  We are now ready to state the version of the Pirashvili spectral 
sequence used in this paper.  Compare with \cite{Lodder} and \cite{Pirashvili}.

\begin{theorem}
Let $\frak{g}$ be a Lie algebra over a field $\bf{F}$ and let $V$ be a left $\frak{g}$-module.  Then there is a 
first-quadrant spectral sequence converging to $H^*_{\rm{rel}}( \frak{g} ; \, V)$ with
$$  E_2^{m, \, k} \simeq HR^m ( \frak{g} ) \otimes HL^k ( \frak{g}; \, V), \ \ \ m \geq 0, \ \ \ k \geq 0 ,  $$
provided that $HR^m ( \frak{g} )$ and $HL^k ( \frak{g} ; \, V)$ are finite dimensional vector spaces in each dimension.  
If this finiteness condition is not satisfied, then the completed tensor product $\widehat {\otimes}$ can be used.
\end{theorem}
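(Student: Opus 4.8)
The plan is to obtain the spectral sequence from a filtration of the relative cochain complex $C^*_{\rm{rel}}$ whose associated graded factors as an (external) tensor product of the complex $CR^*(\frak{g})$, computing $HR^*(\frak{g})$, with the Leibniz complex $CL^*(\frak{g}; V)$, computing $HL^*(\frak{g}; V)$. Since $\pi_{\rm{rel}}$ is surjective, $C^n_{\rm{rel}} \simeq {\rm{Hom}}_{\bf{F}}(\ker \pi_{\rm{rel}}, V)$ with $\ker \pi_{\rm{rel}} \subseteq \frak{g}^{\otimes(n+2)}$, so it is enough to filter the relative chain object and dualize. The guiding idea, following Pirashvili \cite{Pirashvili}, is that a pure tensor may be organized into a leading antisymmetrized block, which carries the coadjoint data recorded by $HR^*$, followed by a pure-tensor tail on which the Leibniz differential with coefficients in $V$ acts.

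Concretely, I would filter by the length $m$ of the leading wedge block: $F_p$ consists of those relative chains whose first $p+2$ slots have been antisymmetrized, the remaining slots forming a free tensor tail, and dually this produces a decreasing filtration of $C^*_{\rm{rel}}$ indexed by the $HR$-degree $m$. On the associated graded, the Leibniz coboundary of Equation~\eqref{coboundary} separates into two parts: the terms acting only within the tail, together with the adjoint action on the $V$-coefficient, reproduce the differential of $CL^*(\frak{g}; V)$ and preserve $m$, whereas the terms that antisymmetrize a tail factor into the leading block reproduce the cokernel differential of $CR^*(\frak{g})$ and raise $m$ by one. Arranging the filtration so that $d_0$ is the tail (Leibniz) differential gives $E_1^{m, \, k} \simeq CR^m(\frak{g}) \otimes HL^k(\frak{g}; V)$, after which $d_1$ is precisely the $CR$-differential on the first factor and $E_2^{m, \, k} \simeq HR^m(\frak{g}) \otimes HL^k(\frak{g}; V)$, as required.

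The main obstacle is the combinatorial identification of the associated graded with the tensor product bicomplex $CR^*(\frak{g}) \otimes CL^*(\frak{g}; V)$: one must verify that, modulo the fully antisymmetrized subspace, the Leibniz differential splits exactly into the ``tail'' and ``antisymmetrize-into-the-block'' pieces above, with no cross terms surviving at the given filtration level. This is the shuffle and antisymmetrization bookkeeping underlying the comparison between $\frak{g}^{\otimes N}$ and $\frak{g}^{\wedge N}$, and it is where the skew-symmetry of the bracket and the derivation property of the action of $\frak{g}$ on $V$ must be tracked carefully; I would check it on generators of the form $g_0 \otimes (g_1 \wedge \cdots \wedge g_{m+1}) \otimes (h_1 \otimes \cdots \otimes h_k)$. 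Because the filtration is bounded in each total degree, the spectral sequence lies in the first quadrant and converges to $H^*_{\rm{rel}}(\frak{g}; V)$. Finally, rewriting the associated graded as the external tensor product of the two cochain complexes requires commuting ${\rm{Hom}}_{\bf{F}}(-, \, V)$ past a tensor product, which over a field is the Künneth/duality isomorphism; this is exactly where the finite-dimensionality of $HR^m(\frak{g})$ and $HL^k(\frak{g}; V)$ is used, and the completed tensor product $\widehat{\otimes}$ restores the isomorphism when finiteness fails.
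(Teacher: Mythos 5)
Your overall strategy coincides with the paper's: both filter $C^*_{\rm{rel}}$ by the length of an antisymmetrized block, so that $d_0$ is the Leibniz differential (giving $E_1$ in terms of $HL^k(\frak{g};V)$), $d_1$ lands in the $CR$-direction, and convergence follows because the filtration is bounded in each total degree. However, there is a genuine gap at the passage from $E_1$ to $E_2$. Your claim that the coboundary splits into a ``tail'' piece and an ``antisymmetrize-into-the-block'' piece \emph{with no cross terms surviving at the given filtration level} is not correct, and the conclusion that $d_1$ is ``precisely the $CR$-differential on the first factor'' does not follow from it. The cross terms in Equation \eqref{coboundary} --- the terms $(-1)^i g_i \cdot f(\ldots \widehat{g}_i \ldots)$ in which $g_i$ lies in the block and acts on the coefficient $V$, together with the bracket terms $[g_i,\, g_j]$ pairing a block factor with a tail factor --- do survive: they raise the filtration index by exactly one, so while they are invisible in $d_0$, they contribute to $d_1$. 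On the $E_1$ page these terms assemble into the natural action of $\frak{g}$ on ${\rm{Hom}}(\frak{g}^{\otimes k},\, V)$ (the action written out at the start of \S 3.2), hence into an action of $\frak{g}$ on $HL^k(\frak{g};\, V)$. Thus $d_1$ is a priori a $CR$-type differential with coefficients in the \emph{$\frak{g}$-module} $HL^k(\frak{g};\, V)$, not the $CR$-differential tensored with the identity. The identification $E_2^{m,\,k} \simeq HR^m(\frak{g}) \otimes HL^k(\frak{g};\, V)$ therefore requires the additional fact, which the paper invokes explicitly, that the induced action of $\frak{g}$ on $HL^*(\frak{g};\, V)$ is trivial; only then do the cross terms vanish on cohomology and $d_1$ reduce to the $CR$-differential. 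You correctly located this as the main obstacle, but resolved it the wrong way: without the triviality-of-the-action argument the stated form of $E_2$ is unjustified.

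A secondary discrepancy concerns the orientation of the block. You antisymmetrize the \emph{first} $p+2$ slots, whereas the paper's filtration $F^{m,\,k} = A^{m,\,k}/{\rm{Hom}}(\Lambda^{k+m+2}(\frak{g}),\, V)$ uses cochains skew-symmetric in the \emph{last} $m+1$ tensor factors, leaving one free factor in front of the wedge. With the paper's orientation the graded piece is literally ${\rm{Hom}}\big(\frak{g}\otimes\frak{g}^{\wedge(m+1)}/\frak{g}^{\wedge(m+2)},\, {\rm{Hom}}(\frak{g}^{\otimes k},\, V)\big)$, i.e., the object defining $CR^m(\frak{g})$ appears on the nose. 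With your orientation, since Equation \eqref{coboundary} always brackets a right-hand factor into a left-hand position, the object that emerges from the associated graded is the mirror one (a wedge followed by a free factor), and matching it to $CR^*(\frak{g})$ as defined in the paper requires extra sign and ordering bookkeeping or an explicit flip. This part is repairable, but together with the missing triviality argument it means your outline does not yet establish the theorem.
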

\begin{proof}
We outline the key features of the construction and introduce notation that will be used in the sequel.  Let
$A^{m, \, k}$ denote those elements $f \in {\rm{Hom}}( \frak{g}^{\otimes (k + m + 2)}, \; V)$ that are skew-symmetric in the last
$m+1$ tensor factors of $\frak{g}^{\otimes (k + m + 2)}$.  Filter the complex $C^*_{\rm{rel}}$ by 
$$  F^{m, \, k} =   A^{m, \, k} / {\rm{Hom}}( \Lambda^{k+m+2} ( \frak{g}) , \, V) .  $$
Then $F^{m, \, *}$ is a subcomplex of $C^*_{\rm{rel}}$ with $F^{0, \, *} = C^*_{\rm{rel}}$ and  
$F^{m+1, \, *}  \subseteq F^{m, \, *}$.  To identify the $E^{*, \, *}_0$ term, use the isomorphism
\begin{equation}  \label{switch}
{\rm{Hom}}( \frak{g}^{\otimes (k+m+2)}, \, V) =
 {\rm{Hom}}( \frak{g}^{\otimes k} \otimes \frak{g}^{\otimes (m+2)}, \, V) \simeq  
 {\rm{Hom}}( \frak{g}^{\otimes (m+2)}, \, {\rm{Hom}} ( \frak{g}^{ \otimes k}, \, V)) 
 \end{equation}
Then 
\begin{equation*}
\begin{split}
E^{m, \, k}_0 & = F^{m, \, k}/ F^{m+1, \, k-1}  \\
& \simeq {\rm{Hom}} ( \frak{g} \otimes \frak{g}^{\wedge (m+1)} / \frak{g}^{\wedge (m+2)} , \, 
{\rm{Hom}}( \frak{g}^{\otimes k}, \, V) ), 
\end{split}
\end{equation*}
and $d^0_{m, \, k} : E^{m, \, k}_0 \to E^{m, \, k+1}_0$, $m \geq 0$, $k \geq 0$.  It follows that
$$  E^{m, \, k}_1 \simeq {\rm{Hom}}( \frak{g} \otimes \frak{g}^{\wedge (m+1)} / \frak{g}^{\wedge (m+2)} , \, 
HL^k ( \frak{g}; \, V)) .  $$
Now, $d^1_{m, \, k} : E^{m, \, k}_1 \to E^{m+1, \, k}$.  Since the action of $\frak{g}$ on $HL^* ( \frak{g}; \, V)$
is trival, we have $E^{m, \, k}_2 \simeq HR^m ( \frak{g} ) \widehat{\otimes} HL^k ( \frak{g}; \, V)$.  Using
the isomorphism \eqref{switch}, we consider an element of $E^{m, \, k}_2$ operationally in the form
$HL^k ( \frak{g}; \, V) \widehat{\otimes} HR^m ( \frak{g} )$.  
\end{proof}

\subsection{Lie-Algebra Cohomology}

For a Lie algebra $\frak{g}$ and a left $\frak{g}$-module $V$ over a field, let
$$  V^{\frak{g}} = \{ v \in V \ | \ gv = 0 , \ \forall \, g \in \frak{g}  \}  $$
be the subspace  of invariants.  Let $V_{\frak{g}} = V/[ \frak{g}, \, V]$ be the quotient space of coinvariants, where
$[ \frak{g}, \, V]$ is the span of all elements of the form $\{ gv \ | \ g \in \frak{g}, \, v \in V \}$.  For 
$f \in  {\rm{Hom}}( \frak{g}^{\otimes n}, \, V)$ and $g \in \frak{g}$, define the action of $\frak{g}$ on 
$ {\rm{Hom}}( \frak{g}^{\otimes n}, \, V)$ by
\begin{equation*}
\begin{split}
(gf) & (x_1 \otimes x_2 \otimes \ldots \otimes x_n) = g \cdot f(x_1 \otimes x_2 \otimes \ldots \otimes x_n) \\
& + \sum_{i=1}^n f (x_1 \otimes \ldots \otimes x_{i-1} \otimes [x_i, \, g] \otimes x_{i+1} \otimes \ldots \otimes x_n),
\ \ \ x_i \in \frak{g} .
\end{split}
\end{equation*}
The action of $\frak{g}$ on $\bf{R}$ is always considered trivial, meaning $g \cdot c = 0$, $\forall \, g \in \frak{g}$, 
$\forall \, c \in \bf{R}$,
which determines the action of $\frak{g}$ on ${\rm{Hom}}( \frak{g}^{\otimes n}, \, \br )$.  
The above action on ${\rm{Hom}}( \frak{g}^{\otimes n}, \, V)$ clearly induces an action on skew-symmetric 
elements of ${\rm{Hom}}( \frak{g}^{\otimes n}, \, V)$.  The resulting action of $\frak{g}$ on
${\rm{Hom}}( \frak{g}^{\wedge *}, \, V )$ commutes with the (Chevalley-Eilenberg) coboundary map $\delta$, 
and induces an action on $H^*_{\rm{Lie}}( \frak{g}; \, V)$ \cite{Hoch-Serre}.  Recall the definition of
$\h$ and $J_n$ given at the beginning of $\S$ 3.  
In this subsection we compute
$H^*_{\rm{Lie}}( \frak{h}_n ; \, \br)$, $H^*_{\rm{Lie}}(\frak{h}_n ; \, \frak{h}_n)$, and 
$H^*_{\rm{Lie}}(\frak{h}_n ; \, \frak{h}'_n)$ by applying the Hochschild-Serre spectral sequence \cite{Hoch-Serre}
and identifying certain $\son$-invariant cochains.  The calculations for $HR^*( \frak{h}_n )$ follow from exact sequence 
\eqref{HR-exact}.  
\begin{lemma}
For $n \geq 3$, there are Hochschild-Serre isomorphisms in Lie-algebra cohomology
\begin{align*}
&  H^*_{\rm{Lie}} (  \frak{h}_n ; \, \br ) \simeq 
H^*_{\rm{Lie}} ( \son ; \,  \br ) \otimes \big[ H^*_{\rm{Lie}} ( J_n; \, \br ) \big]^{\son} \\
&  H^*_{\rm{Lie}} (  \frak{h}_n ; \, \frak{h}_n ) \simeq 
H^*_{\rm{Lie}} ( \son ; \, \br ) \otimes \big[ H^*_{\rm{Lie}} ( J_n; \, \frak{h}_n ) \big]^{\son} \\
&  H^*_{\rm{Lie}} (  \frak{h}_n ; \, \frak{h}'_n ) \simeq 
H^*_{\rm{Lie}} ( \son ; \, \br ) \otimes \big[ H^*_{\rm{Lie}} ( J_n; \, \frak{h}'_n ) \big]^{\son} 
\end{align*}
\end{lemma}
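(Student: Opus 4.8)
The lemma asserts three Hochschild–Serre isomorphisms for the Lie algebras $H^*_{\rm{Lie}}(\h;\,M)$ with $M\in\{\br,\,\h,\,\h'_n\}$, each splitting as $H^*_{\rm{Lie}}(\son;\br)\otimes[H^*_{\rm{Lie}}(J_n;M)]^{\son}$. The plan is to apply the Hochschild–Serre spectral sequence to the short exact sequence of Lie algebras $0\to J_n\to\h\to\son\to 0$ described at the start of the section, and then show that the spectral sequence degenerates at $E_2$ in a way that realizes the stated tensor product.

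\begin{proof}[Proof plan]
First I would set up the Hochschild–Serre spectral sequence associated to the ideal $J_n\subseteq\h$ with quotient $\h/J_n\simeq\son$. For a left $\h$-module $M$ this has the form
\begin{equation*}
E_2^{p,q}\simeq H^p_{\rm{Lie}}\big(\son;\,H^q_{\rm{Lie}}(J_n;\,M)\big)\Longrightarrow H^{p+q}_{\rm{Lie}}(\h;\,M),
\end{equation*}
where $\son$ acts on $H^q_{\rm{Lie}}(J_n;\,M)$ through the conjugation action induced by the extension. I would run this simultaneously for the three coefficient modules $M=\br$, $M=\h$, and $M=\h'_n$, since the structural argument is identical in each case and only the module changes.

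Next I would analyze the $\son$-action on $H^q_{\rm{Lie}}(J_n;\,M)$. Because $J_n$ is abelian, its cohomology with trivial coefficients is an exterior algebra on the dual basis $\{dx^i\}$, and with coefficients in $M$ it is $\Lambda^*(J_n^*)\otimes M$ computed as a $J_n$-module; in all three cases these are finite-dimensional $\son$-representations. The key reduction is that the edge terms $E_2^{p,q}=H^p_{\rm{Lie}}(\son;\,H^q_{\rm{Lie}}(J_n;\,M))$ can be rewritten using the fact that $\son$ is semisimple (here $n\ge 3$ guarantees $\son$ is reductive with trivial first and second cohomology in the relevant degrees): for a semisimple Lie algebra over a field of characteristic zero acting on a finite-dimensional module $W$, one has $H^p_{\rm{Lie}}(\son;\,W)\simeq H^p_{\rm{Lie}}(\son;\,\br)\otimes W^{\son}$. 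Applying this with $W=H^q_{\rm{Lie}}(J_n;\,M)$ yields
\begin{equation*}
E_2^{p,q}\simeq H^p_{\rm{Lie}}(\son;\,\br)\otimes\big[H^q_{\rm{Lie}}(J_n;\,M)\big]^{\son}.
\end{equation*}
At this point the total $E_2$ page is already the tensor product appearing in the statement, read off across all bidegrees.

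Finally I would argue that the spectral sequence collapses at $E_2$, so that $H^*_{\rm{Lie}}(\h;\,M)$ is the associated graded of the claimed product and, since everything is over a field, is isomorphic to it. The cleanest route is to exhibit explicit $\son$-invariant cocycle representatives for the classes in $[H^q_{\rm{Lie}}(J_n;\,M)]^{\son}$ that survive to genuine cocycles on $\h$, and to note that the factor $H^p_{\rm{Lie}}(\son;\br)$ is represented by pullbacks of invariant forms on $\son$; the product of such invariant classes is again a cocycle, giving a multiplicative splitting map that is injective on $E_2$ and hence forces all higher differentials to vanish. I expect the main obstacle to be precisely this degeneration step: verifying that the differentials $d_r$ for $r\ge 2$ annihilate the invariant classes, which amounts to checking that the invariant cochains I construct are closed in the full complex of $\h$ and not merely on the associated graded. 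The semisimplicity of $\son$ and the invariant-theoretic description of the cohomology (as used in \cite{Biyogmam}) should make this tractable, but it is where the real content lies, since the abstract spectral sequence alone does not guarantee collapse without such representatives.
\end{proof}
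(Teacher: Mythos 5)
Your proposal is correct and is essentially the paper's own argument: the paper's entire proof consists of applying the Hochschild--Serre spectral sequence to the ideal $J_n$ of $\h$ with semisimple quotient $\h / J_n \simeq \son$, exactly as you do. The one point worth noting is that the collapse you single out as the ``main obstacle'' requires no extra work here: since $\son$ is semisimple and all three coefficient modules $\br$, $\h$, $\frak{h}'_n$ are finite-dimensional, the tensor-product decomposition (including degeneration at $E_2$) is precisely the content of the classical Hochschild--Serre theorem for a semisimple quotient that the paper cites, so your Whitehead-lemma identification of the $E_2$ term together with that citation already completes the proof, without needing to construct explicit invariant representatives (though the paper does construct them later for other purposes).
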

\begin{proof}
The proof follows by applying the Hochschild-Serre spectral sequence \cite{Hoch-Serre} to the ideal $J_n$ of
$\frak{h}_n$ and using the isomorphism of Lie algebras $\frak{h}_n / J_n \simeq \son$.
\end{proof}

Now $[ H^*_{\rm{Lie}} ( J_n; \, \br ) ]^{\son}$  is the cohomology of the cochain complex
$$  \big[ {\rm{Hom}} ( J^{\wedge *}_n , \, \br ) \big]^{\son}  \simeq {\rm{Hom}} ( (J^{\wedge *}_n)_{\son} , \, \br ) \simeq
{\rm{Hom}}( (J^{\wedge *}_n)^{\son} , \, \br ) .  $$ 
From \cite{Biyogmam} or by a direct calculation
$$  (J^{\wedge k}_n )^{\son} =  \begin{cases} \br  &  k = 0 \\
\langle \tfrac{\partial}{\partial x^1} \wedge \tfrac{\partial}{\partial x^2 } \wedge
\ldots \wedge	\tfrac{\partial}{\partial x^n }\rangle &  k = n \\			
0  &  \mbox{otherwise.}    \end{cases}	$$		
Let $dx^i$ be the element of ${\rm{Hom}}( \frak{h}_n , \, \br )$ dual to $\frac{\partial}{\partial x^i}$ with respect to
the basis $\{ \alpha_{ij} \} \cup \{ \frac{\partial}{\partial x^i} \}$ of $\frak{h}_n$.  
\begin{lemma}
For $n \geq 3$, 
$$  H^*_{\rm{Lie}} ( \frak{h}_n ; \, \br ) \simeq H^*_{\rm{Lie}}( \son ; \, \br ) \oplus
\big(  H^*_{\rm{Lie}}( \son ; \, \br ) \otimes \langle v^*_n \rangle \big),  $$ 
where $v^*_n = dx^1 \wedge dx^2 \wedge \ldots \wedge  dx^n$.  
\end{lemma}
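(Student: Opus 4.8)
The plan is to assemble the statement directly from the two preceding results: the Hochschild--Serre isomorphism for $\br$-coefficients and the computation of the $\son$-invariants $(J_n^{\wedge k})^{\son}$ recorded just above. The first of the three Hochschild--Serre isomorphisms gives
$$ H^*_{\rm{Lie}}(\h; \, \br) \simeq H^*_{\rm{Lie}}(\son; \, \br) \otimes \big[ H^*_{\rm{Lie}}(J_n; \, \br)\big]^{\son}, $$
so the whole problem reduces to identifying the graded vector space $[H^*_{\rm{Lie}}(J_n; \, \br)]^{\son}$ and then distributing the tensor product over its summands.

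First I would compute $[H^*_{\rm{Lie}}(J_n; \, \br)]^{\son}$ via its identification as the cohomology of the invariant subcomplex ${\rm{Hom}}((J_n^{\wedge *})^{\son}, \, \br)$ noted above. The key observation is that the Chevalley--Eilenberg differential vanishes identically on this complex: in the coboundary formula the adjoint-action term is zero because $\son$ acts trivially on the coefficients $\br$, and the bracket term is zero because $J_n$ is Abelian. Hence the cohomology equals the complex itself, and substituting the computation of $(J_n^{\wedge k})^{\son}$ gives $\br$ in degree $0$ (the constant functional) and a one-dimensional space in degree $n$, with zero in every other degree. The degree-$n$ generator is the functional dual to $\frac{\p}{\p x^1} \wedge \frac{\p}{\p x^2} \wedge \cdots \wedge \frac{\p}{\p x^n}$, namely $v^*_n = dx^1 \wedge dx^2 \wedge \cdots \wedge dx^n$. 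Thus $[H^*_{\rm{Lie}}(J_n; \, \br)]^{\son} \simeq \br \oplus \langle v^*_n\rangle$.

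Finally I would substitute this back and distribute the tensor product over the direct sum $\br \oplus \langle v^*_n\rangle$, which yields exactly
$$ H^*_{\rm{Lie}}(\h; \, \br) \simeq H^*_{\rm{Lie}}(\son; \, \br) \oplus \big(H^*_{\rm{Lie}}(\son; \, \br) \otimes \langle v^*_n\rangle\big). $$
I expect no serious obstacle here, since all the heavy machinery has already been set up. The only points requiring care are the vanishing of the invariant differential (an immediate consequence of $J_n$ being Abelian with trivial coefficients) and the correct matching of degrees, so that the single nontrivial invariant class lands in degree $n$ and is named $v^*_n$. The deeper fact that taking $\son$-invariants commutes with cohomology, which underlies the passage to the invariant subcomplex, is already in force for $n \geq 3$ because $\son$ is then semisimple and its finite-dimensional representations are completely reducible.
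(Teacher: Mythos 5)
Your proposal is correct and follows essentially the same route as the paper, which presents this lemma as an immediate consequence of the preceding Hochschild--Serre isomorphism $H^*_{\rm{Lie}}(\frak{h}_n; \, \br) \simeq H^*_{\rm{Lie}}(\son; \, \br) \otimes [H^*_{\rm{Lie}}(J_n; \, \br)]^{\son}$ and the displayed computation of $(J_n^{\wedge k})^{\son}$. Your explicit remarks that the Chevalley--Eilenberg differential vanishes on the invariant subcomplex (since $J_n$ is Abelian and the coefficients are trivial) and that invariants commute with cohomology by semisimplicity of $\son$ are exactly the details the paper leaves implicit.
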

Likewise, $[ H^*_{\rm{Lie}} ( J_n ; \, \frak{h}'_n ) ]^{\son}$ is the cohomology of the cochain complex
$$  \big[ {\rm{Hom}} ( \frak{h}_n \otimes J^{\wedge *}_n , \, \br ) \big]^{\son}  
\simeq {\rm{Hom}} ( ( \frak{h}_n \otimes J^{\wedge *}_n )_{\son} , \, \br ) \simeq
{\rm{Hom}}( ( \frak{h}_n \otimes J^{\wedge *}_n )^{\son} , \, \br ) .  $$ 
Now, $(\frak{h}_n \otimes J^{\wedge *}_n)^{\son} \simeq (J_n \otimes J^{\wedge *}_n )^{\son} \oplus
( \son \otimes J^{\wedge *}_n )^{\son}$.  From \cite{Biyogmam}, we have
$$  ( J_n \otimes J^{\wedge k}_n )^{\son} \simeq \begin{cases}  \langle g_n \rangle &  k = 1 \\
	\langle w_n \rangle &  k = n - 1 \\
	0 & \mbox{otherwise,}	\end{cases}  $$
where $g_n = \sum_{i = 1}^n \frac{\partial}{\partial x^i} \otimes \frac{\partial}{\partial x^i}$, and
$$  w_n = \sum_{i = 1}^n (-1)^{i-1} \tfrac{\partial}{\partial x^i} \otimes 
\tfrac{\partial}{\partial x^1} \wedge \tfrac{\partial}{\partial x^2} \wedge \ldots  
\widehat{\tfrac{\partial}{\partial x^i}}  \ldots \wedge \tfrac{\partial}{\partial x^n} .  $$
Let $g^*_n = \sum_{i = 1}^n dx^i \otimes dx^i$, 
$w^*_n = \sum_{i = 1}^n (-1)^{n-1} dx^i \otimes dx^1 \wedge dx^2 \wedge \ldots \widehat{d}x^i \ldots  \wedge dx^n$.
Also from \cite{Biyogmam}
$$  ( \son \otimes J^{\wedge k}_n ) ^{\son} \simeq \begin{cases} \langle s_n \rangle &  k = 2 \\
	\langle \gamma_n \rangle &  k = n - 2  \\
 	 0  &  \mbox{otherwise,}  \end{cases}  $$ 
where $s_n = \sum_{1 \leq i < j \leq n} \alpha_{ij}  \otimes \frac{\partial}{\partial x^i} \wedge \frac{\partial}{\partial x^j}$,
and
$$  \gamma_n = \sum_{1 \leq i < j \leq n} (-1)^{i + j - 1} \alpha_{ij} \otimes \tfrac{\partial}{\partial x^1}	 
\wedge \tfrac{\partial}{\partial x^2}
\wedge \ldots \widehat{ \tfrac{\partial}{\partial x^i} } \ldots  \widehat{ \tfrac{\partial}{\partial x^j}  } \ldots
\wedge \tfrac{\partial}{\partial x^n}.  $$
Let $s^*_n = \sum_{1 \leq i < j \leq n} \alpha^*_{ij} \otimes dx^i \wedge dx^j$, and
$$  \gamma^*_n = \sum_{1 \leq i < j \leq n} \alpha^*_{ij} \otimes dx^1 \wedge dx^2 \ldots \widehat{d}x^i \ldots
\widehat{d}x^j \ldots \wedge dx^n .  $$
In the cochain complex $( {\rm{Hom}}( (\frak{h}_n \otimes J^*_n )^{\son}, \,  \br), \ \delta)$, we have
$\delta (g^*_n) = -2 s^*_n$, $\delta (\gamma^*_n ) = 0$, $\delta (w^*_n) = 0$.  
\begin{lemma}
For $n \geq 3$,
$H^*_{\rm{Lie}}(J_n; \, \frak{h}'_n ) \simeq H^*_{\rm{Lie}} ( \son ; \, \br ) \otimes 
\langle \gamma^*_n , \, w^*_n \rangle$.  
\end{lemma}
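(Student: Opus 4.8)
The plan is to deduce the statement from the preceding Hochschild--Serre isomorphism together with a short computation of an $\son$-invariant complex. By that isomorphism, $H^*_{\rm{Lie}}(\h; \, \frak{h}'_n) \simeq H^*_{\rm{Lie}}(\son; \, \br) \otimes [H^*_{\rm{Lie}}(J_n; \, \frak{h}'_n)]^{\son}$, so it suffices to show $[H^*_{\rm{Lie}}(J_n; \, \frak{h}'_n)]^{\son} \simeq \langle \g, \, w^*_n \rangle$. As recorded just above, this invariant piece is the cohomology of the complex $([{\rm{Hom}}((\h \otimes J_n^{\wedge *})^{\son}, \, \br)], \, \delta)$, and the invariant-theory classification of \cite{Biyogmam} shows this complex to be one-dimensional in each of the cohomological degrees $1$, $2$, $n-2$, $n-1$, spanned respectively by $g^*_n$, $s^*_n$, $\g$, $w^*_n$, and zero elsewhere. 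The whole problem thus reduces to the cohomology of this four-term complex.

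First I would pin down the differential. We are given $\delta g^*_n = -2 s^*_n$, $\delta \g = 0$, and $\delta w^*_n = 0$; since $s^*_n = -\tfrac{1}{2}\delta g^*_n$, the identity $\delta^2 = 0$ forces $\delta s^*_n = 0$ as well. Hence $g^*_n \mapsto -2 s^*_n$ is the only nonzero differential among the four generators. For $n \geq 5$ the degrees satisfy $1 < 2 < n-2 < n-1$, so the complex is the direct sum of the acyclic two-term piece $\langle g^*_n \rangle \to \langle s^*_n \rangle$ (an isomorphism up to the nonzero scalar $-2$) and the two one-dimensional complexes $\langle \g \rangle$ and $\langle w^*_n \rangle$ with zero differential. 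Consequently $g^*_n$ and $s^*_n$ contribute nothing to cohomology, while $\g$ survives in degree $n-2$ and $w^*_n$ in degree $n-1$, giving $[H^*_{\rm{Lie}}(J_n; \, \frak{h}'_n)]^{\son} \simeq \langle \g, \, w^*_n \rangle$.

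Next I would treat the low cases $n = 3$ and $n = 4$ by hand, where degrees collide and the splitting must be rechecked. For $n = 4$ the classes $s^*_n$ and $\g$ both lie in degree $2$; there $\ker \delta = \langle s^*_n, \, \g \rangle$ while $\operatorname{im} \delta = \langle s^*_n \rangle$, so $H^2 \simeq \langle \g \rangle$ and $H^3 \simeq \langle w^*_n \rangle$. For $n = 3$ the pairs $(g^*_n, \g)$ and $(s^*_n, w^*_n)$ share degrees $1$ and $2$; in degree $1$ one has $\ker \delta = \langle \g \rangle$ because $\delta g^*_n = -2 s^*_n \neq 0$, and in degree $2$ the quotient $\langle s^*_n, \, w^*_n \rangle / \langle s^*_n \rangle$ leaves $\langle w^*_n \rangle$. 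In every case the answer is $\langle \g, \, w^*_n \rangle$, and substitution into the Hochschild--Serre isomorphism yields the stated tensor decomposition.

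I expect the main obstacle to be exactly this low-dimensional bookkeeping, where the clean decomposition into an acyclic summand plus two surviving generators is no longer available. Two facts make the argument go through: the differential has no component beyond $g^*_n \mapsto -2 s^*_n$ --- in particular $\delta s^*_n = 0$ even when its target degree is occupied, as happens for $n = 5$ where degree $n-2 = 3$ already contains $\g$ --- and $\g$, $w^*_n$, $s^*_n$ are genuinely independent invariant cochains in the classification of \cite{Biyogmam}, so no accidental proportionality with the one-dimensional image $\langle s^*_n \rangle$ can occur.
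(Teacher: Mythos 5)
Your proposal is correct and takes essentially the same route as the paper: the paper's (implicit) proof is exactly the Hochschild--Serre isomorphism combined with the observation that the $\frak{so}(n)$-invariant cochain complex is spanned by $g^*_n$, $s^*_n$, $\gamma^*_n$, $w^*_n$ with the only nonzero differential being $\delta g^*_n = -2s^*_n$, so its cohomology is $\langle \gamma^*_n , \, w^*_n \rangle$. Your extra bookkeeping for the degree collisions at $n = 3$ and $n = 4$ is a correct elaboration of a point the paper passes over silently.
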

In the long exact sequence \eqref{HR-exact}, the map $\pi^*_R : H^*_{\rm{Lie}}( \frak{h}_n ; \, \br) \to
H^*_{\rm{Lie}}( \frak{h}_n ; \, \frak{h}'_n )$ sends $H^*_{\rm{Lie}} ( \son ; \br ) \otimes \langle v^*_n \rangle$
to $H^*_{\rm{Lie}} ( \son ; \br ) \otimes \langle w^*_n \rangle$.  It follows that
\begin{lemma}  \label{HR-calculation}
For $m \geq 0$, 
$$  HR^m( \frak{h}_n ) \simeq H^{m+3}_{\rm{Lie}} ( \son ; \, \br ) 
\oplus \big( H^{m+3-n}_{\rm{Lie}}( \son ; \, \br) \otimes \langle \gamma^*_n \rangle \big) , $$
where $c_R [ HR^m( \frak{h}_n ) ] = H_{\rm{Lie}}^{m+3} ( \son ; \br )$ and 
$$  H^{m+3-n} _{\rm{Lie}} ( \son ; \, \br ) \otimes \langle \gamma^*_n \rangle \subseteq
{\rm{Im}} [ H^{m+1}_{\rm{Lie}} ( \fhn ; \,  \fhn') \to HR^m( \fhn ) ] $$
in exact sequence \eqref{HR-exact}.  
\end{lemma}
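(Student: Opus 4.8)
The plan is to extract $HR^m(\fhn)$ directly from the Lie-coadjoint long exact sequence \eqref{HR-exact}, feeding in the two Lie-cohomology computations already established together with the stated behavior of $\pi^*_R$. First I would isolate the four-term stretch of \eqref{HR-exact} straddling $HR^m(\fhn)$, namely
$$H^{m+2}_{\rm{Lie}}(\fhn; \br) \overset{\pi^*_R}{\lra} H^{m+1}_{\rm{Lie}}(\fhn; \fhn') \lra HR^m(\fhn) \overset{c_R}{\lra} H^{m+3}_{\rm{Lie}}(\fhn; \br) \overset{\pi^*_R}{\lra} H^{m+2}_{\rm{Lie}}(\fhn; \fhn').$$
Exactness immediately converts this into a short exact sequence
$$0 \lra {\rm{coker}}\big[\pi^*_R : H^{m+2}_{\rm{Lie}}(\fhn; \br) \to H^{m+1}_{\rm{Lie}}(\fhn; \fhn')\big] \lra HR^m(\fhn) \overset{c_R}{\lra} \ker\big[\pi^*_R : H^{m+3}_{\rm{Lie}}(\fhn; \br) \to H^{m+2}_{\rm{Lie}}(\fhn; \fhn')\big] \lra 0,$$
so the entire lemma reduces to identifying this kernel and cokernel.

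Next I would insert the established descriptions $H^*_{\rm{Lie}}(\fhn; \br) \simeq H^*_{\rm{Lie}}(\son; \br) \oplus \big(H^*_{\rm{Lie}}(\son; \br) \otimes \langle v^*_n\rangle\big)$ and $H^*_{\rm{Lie}}(\fhn; \fhn') \simeq H^*_{\rm{Lie}}(\son; \br) \otimes \langle \gamma^*_n, w^*_n\rangle$, keeping careful track of degrees: $v^*_n$ lies in degree $n$, $w^*_n$ in degree $n-1$, and $\gamma^*_n$ in degree $n-2$, which is exactly compatible with $\pi^*_R$ lowering cohomological degree by one. The key structural input is the stated fact that $\pi^*_R$ carries the summand $H^*_{\rm{Lie}}(\son; \br) \otimes \langle v^*_n\rangle$ onto $H^*_{\rm{Lie}}(\son; \br) \otimes \langle w^*_n\rangle$; read at the level needed here, this says $\pi^*_R$ restricts to an isomorphism of the $v^*_n$-summand onto the $w^*_n$-summand and vanishes on the pulled-back summand $H^*_{\rm{Lie}}(\son; \br)$. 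Granting this, in degree $m+3$ the kernel of $\pi^*_R$ is precisely the pulled-back summand $H^{m+3}_{\rm{Lie}}(\son; \br)$, while in degree $m+1$ the image of $\pi^*_R$ fills out the entire $w^*_n$-summand, so the cokernel is the $\gamma^*_n$-summand $H^{m+3-n}_{\rm{Lie}}(\son; \br) \otimes \langle \gamma^*_n\rangle$ (using $(m+1)-(n-2) = m+3-n$, with the convention that negative degrees contribute zero).

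Substituting these two identifications gives
$$0 \lra H^{m+3-n}_{\rm{Lie}}(\son; \br) \otimes \langle \gamma^*_n\rangle \lra HR^m(\fhn) \overset{c_R}{\lra} H^{m+3}_{\rm{Lie}}(\son; \br) \lra 0,$$
which splits since we work over a field, producing the asserted direct-sum decomposition. The two remaining claims are then read off from exactness of this sequence: the connecting map satisfies $c_R\big[HR^m(\fhn)\big] = \ker(\pi^*_R) = H^{m+3}_{\rm{Lie}}(\son; \br)$ because $c_R$ maps onto that kernel, and the $\gamma^*_n$-summand coincides with the image of $H^{m+1}_{\rm{Lie}}(\fhn; \fhn') \to HR^m(\fhn)$ since that image is exactly the cokernel term occurring on the left.

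I expect the genuinely delicate point to be the full justification of the two halves of the stated claim about $\pi^*_R$, which the reduction above takes for granted: that $\pi^*_R$ is injective on the $v^*_n$-part (hence an isomorphism onto the $w^*_n$-part) and that it annihilates the pulled-back $\son$-classes. The first should follow from degree-matching and the naturality of $\pi_R$, but the vanishing on pulled-back classes needs a genuine cochain-level verification. A class in the image of $H^*_{\rm{Lie}}(\son; \br) \to H^*_{\rm{Lie}}(\fhn; \br)$ is represented by a form built from the $\alpha^*_{ij}$ alone, carrying no $dx^i$ factor, whereas both $\gamma^*_n$ and $w^*_n$ necessarily involve such factors; using the cochain isomorphism $\Phi$ and the explicit formula for $\pi_R$, I would check that $\pi^*_R$ sends any such pulled-back representative to a coboundary in ${\rm{Hom}}(\fhn \otimes \fhn^{\wedge *}, \br)$, which is where the real work of the lemma is concentrated.
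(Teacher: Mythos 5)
Your proposal is correct and takes essentially the same route as the paper: the paper obtains the lemma by feeding the computed groups $H^*_{\rm{Lie}} ( \fhn ; \, \br )$ and $H^*_{\rm{Lie}} ( \fhn ; \, \fhn' )$, together with the asserted behavior of $\pi^*_R$ (carrying $H^*_{\rm{Lie}} ( \son ; \, \br ) \otimes \langle v^*_n \rangle$ onto $H^*_{\rm{Lie}} ( \son ; \, \br ) \otimes \langle w^*_n \rangle$ and annihilating the inflated $\son$-summand), into the exact sequence \eqref{HR-exact}, stating only ``it follows that'' where you spell out the kernel/cokernel chase and the splitting over $\br$. Note that the paper, like you, gives no cochain-level verification of that key assertion about $\pi^*_R$, so your closing caveat points to a gap in the paper's exposition rather than to a defect of your own argument.
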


To compute $H^*_{\rm{Lie}} ( \fhn ; \, \fhn )$, we must first identify 
$[ {\rm{Hom}} ( J^{\wedge *}_n , \, \fhn ) ]^{\son}$.   
\begin{lemma}  \label{iso-invar}
Let $\varphi \in {\rm{Hom}} ( J^{\wedge k}_n , \, \fhn )$ and 
$z = \frac{\partial}{\partial x^{i_1}} \wedge \frac{\partial}{\partial x^{i_2}}
\wedge \ldots \wedge \frac{\partial}{\partial x^{i_k}}$.  There is an $\son$-equivariant isomorphism
$$  \Phi : {\rm{Hom}} ( J^{\wedge k}_n , \, \fhn ) \to J^{\wedge k}_n \otimes \fhn \ \  \mbox{given by} \ \ 
\Phi (\varphi) = \sum_{i_1 < i_2 < \ldots < i_k} z \otimes \varphi (z) .  $$
\end{lemma}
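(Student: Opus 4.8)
The plan is to realize $\Phi$ as a composite of two $\son$-equivariant isomorphisms: the tautological identification ${\rm{Hom}}(W, \, V) \simeq W^* \otimes V$ valid for any finite-dimensional $W$, followed by the self-duality $J^{\wedge k}_n \simeq (J^{\wedge k}_n)^*$ induced by the Euclidean metric. Writing $\{ z \}$ for the monomial basis of $J^{\wedge k}_n$ indexed by $i_1 < i_2 < \ldots < i_k$ and $\{ z^* \}$ for the dual basis, the standard isomorphism sends $\varphi \mapsto \sum_z z^* \otimes \varphi(z)$, while the metric (for which the monomials $z$ are orthonormal, since $\{ \pari \}$ is orthonormal) sends $z^* \mapsto z$. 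Composing yields exactly $\Phi(\varphi) = \sum_z z \otimes \varphi(z)$. Both constituents are linear bijections of finite-dimensional spaces, so $\Phi$ is a vector-space isomorphism; its inverse sends $\sum_z z \otimes w_z$ to the linear map $z' \mapsto w_{z'}$.

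First I would record the action of $\son$ on $J_n$. A direct computation gives
$$ [ \alpha_{ab} , \, \pari ] = - \delta_{ai} \tfrac{\p}{\p x^b} + \delta_{bi} \tfrac{\p}{\p x^a} , $$
so in the basis $\{ \pari \}$ each generator $\alpha_{ab}$ acts as a skew-symmetric matrix; that is, $\son$ acts orthogonally on $J_n$. Consequently the induced derivation action on $J^{\wedge k}_n$ is again skew-symmetric in the orthonormal monomial basis $\{ z \}$: writing $\alpha_{ab} \cdot z = \sum_{z'} M_{z'z} \, z'$, one has $M_{z'z} = - M_{zz'}$.

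With the two module structures fixed, namely the action on ${\rm{Hom}}(J^{\wedge k}_n, \, \fhn)$ defined in the preceding subsection and the diagonal action $\alpha_{ab} \cdot ( z \otimes w) = (\alpha_{ab} \cdot z) \otimes w + z \otimes [ \alpha_{ab}, \, w]$ on $J^{\wedge k}_n \otimes \fhn$, I would verify $\Phi( \alpha_{ab} \cdot \varphi) = \alpha_{ab} \cdot \Phi( \varphi)$ by expanding both sides. Using $[ \partial/\partial x^{i_\ell}, \, \alpha_{ab}] = - [ \alpha_{ab}, \, \partial/\partial x^{i_\ell}]$ and the linearity of $\varphi$, the terms carrying the adjoint action $[\alpha_{ab}, \, \varphi(z)]$ on the $\fhn$-factor cancel, and what remains is the identity
$$ \sum_z ( \alpha_{ab} \cdot z) \otimes \varphi(z) + \sum_z z \otimes \varphi( \alpha_{ab} \cdot z) = 0 . $$
Substituting $\alpha_{ab} \cdot z = \sum_{z'} M_{z'z} z'$ and relabelling the summation indices in the second sum rewrites the left-hand side as $\sum_{z, \, z'} ( M_{z'z} + M_{zz'} ) \, z' \otimes \varphi(z)$, which vanishes by skew-symmetry.

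The bijectivity and the bookkeeping of the two actions are routine; the one step carrying genuine content is the skew-symmetry of $M$, that is, the fact that $\son$ acts orthogonally on $J_n$ and hence on every exterior power. This is exactly the $\son$-module self-duality of $J_n$ that makes the metric identification equivariant, and it is the only place where the orthogonal structure of $\son$ (rather than an arbitrary subalgebra action) is used. I expect getting the signs right in $[\alpha_{ab}, \, \pari]$ and in the induced action on $J^{\wedge k}_n$ to be the main obstacle.
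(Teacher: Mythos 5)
Your proof is correct. The paper's own ``proof'' of this lemma is the single sentence ``The proof is a simple calculation,'' and your argument---noting bijectivity via the tautological identification ${\rm{Hom}}(W, V) \simeq W^* \otimes V$ plus the metric self-duality, then verifying $\son$-equivariance through the skew-symmetry of the matrix of $\alpha_{ab}$ acting in the orthonormal monomial basis of $J^{\wedge k}_n$---is exactly that calculation, carried out with the correct signs, so there is nothing to compare beyond saying you have supplied the details the paper omits.
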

\begin{proof}
The proof is a simple calculation.
\end{proof}    
Thus, $[ {\rm{Hom}} ( J^{\wedge k}_n , \, \fhn ) ]^{\son} \simeq ( J^{\wedge k}_n \otimes \fhn )^{\son}
\simeq ( \fhn \otimes J^{\wedge k}_n )^{\son}$, and the latter are discussed earlier in this section.  

By identification of invariants under $\Phi$, we have
\begin{lemma}
The invariants of $[ {\rm{Hom}} (J^{\wedge *}_n, \, \fhn ) ]^{\son}$, $n \geq 3$, are given by
\begin{equation*}
\begin{split}
& I \in {\rm{Hom}} (J_n, \, \fhn ), \ \ \rho \in {\rm{Hom}} (J^{\wedge 2}_n, \, \fhn ), \\ 
& \Gamma \in {\rm{Hom}} (J^{\wedge (n-2)}_n, \, \fhn ), \ \ \mu \in  {\rm{Hom}} (J^{\wedge (n-1)}_n, \, \fhn ),
\end{split}
\end{equation*}
where $I ( \frac{\partial}{\partial x^i} ) = \frac{\partial}{\partial x^i}$, $i = 1, \,  2, \,  \ldots \, $, $n$, 
$\rho  ( \frac{\partial}{\partial x^i} \wedge \frac{\partial}{\partial x^j} ) = \alpha_{ij}$
\begin{equation*}
\begin{split}
& \Gamma  ( \tfrac{\p}{\p x^1} \wedge  \tfrac{\p}{\p x^2} \wedge \ldots 
\widehat{ \tfrac{\p}{\p x^i} } \ldots \widehat{ \tfrac{\p}{\p x^j} }
\ldots \wedge \tfrac{\p}{\p x^n} ) = (-1)^{i+j-1} \alpha_{ij} \\ 
& \mu ( \tfrac{\p}{\p x^1} \wedge  \tfrac{\p}{\p x^2} \wedge \ldots 
\widehat{ \tfrac{\p}{\p x^j} } \ldots \wedge \tfrac{\p}{\p x^n} ) = (-1)^{j-1} \tfrac{\p}{\p x^j} .  
\end{split}
\end{equation*}
\end{lemma}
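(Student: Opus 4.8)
The plan is to reduce the identification of the $\son$-invariant cochains entirely to computations already recorded in this section, using the equivariant isomorphism $\Phi$ of Lemma \ref{iso-invar} as a dictionary. First I would note that, because $\Phi : {\rm{Hom}}(J^{\wedge k}_n, \, \fhn) \to J^{\wedge k}_n \otimes \fhn$ is an isomorphism of $\son$-modules, it restricts to an isomorphism of invariant subspaces, giving $[{\rm{Hom}}(J^{\wedge k}_n, \, \fhn)]^{\son} \simeq (J^{\wedge k}_n \otimes \fhn)^{\son}$. Composing with the flip $u \otimes v \mapsto v \otimes u$, which is an isomorphism of $\son$-modules carrying no sign, further identifies this with $(\fhn \otimes J^{\wedge k}_n)^{\son}$, precisely the space analyzed earlier.

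Next I would insert the $\son$-module splitting $\fhn = J_n \oplus \son$. This splitting is $\son$-stable because $J_n$ is an ideal of $\fhn$ and $\son$ is a subalgebra acted on by its own adjoint representation, so $(\fhn \otimes J^{\wedge k}_n)^{\son} \simeq (J_n \otimes J^{\wedge k}_n)^{\son} \oplus (\son \otimes J^{\wedge k}_n)^{\son}$. Substituting the two invariant calculations cited from \cite{Biyogmam}, the first summand is nonzero only in degrees $k = 1$ (generated by $g_n$) and $k = n-1$ (generated by $w_n$), and the second only in degrees $k = 2$ (generated by $s_n$) and $k = n-2$ (generated by $\gamma_n$), with every remaining degree contributing $0$. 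This yields exactly the four invariant cochains asserted.

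The final step is to transport each generator back through $\Phi^{-1}$ after the flip, reading a tensor $\sum z \otimes \varphi(z)$ as the cochain $z \mapsto \varphi(z)$. Unwinding $g_n = \sum_i \pari \otimes \pari$ produces $I$ with $I(\pari) = \pari$; unwinding $s_n$ produces $\rho$ with $\rho(\pari \wedge \parj) = \alpha_{ij}$; unwinding $\gamma_n$ produces $\Gamma$, carrying the sign $(-1)^{i+j-1}$; and unwinding $w_n$ produces $\mu$, carrying the sign $(-1)^{j-1}$---exactly the formulas displayed in the statement. I expect the only delicate point to be sign bookkeeping, namely propagating the $(-1)^{i+j-1}$ and $(-1)^{j-1}$ conventions correctly through the flip and through $\Phi^{-1}$; distinctness of the four cochains then follows from their degrees and target spaces together with the linear independence of $g_n$, $s_n$, $\gamma_n$, $w_n$.
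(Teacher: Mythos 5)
Your proposal is correct and follows essentially the same route as the paper: the paper's (very terse) proof is precisely the identification of invariants under $\Phi$ from Lemma \ref{iso-invar}, combined with the splitting $(\fhn \otimes J^{\wedge *}_n)^{\son} \simeq (J_n \otimes J^{\wedge *}_n)^{\son} \oplus (\son \otimes J^{\wedge *}_n)^{\son}$ and the invariants $g_n$, $s_n$, $\gamma_n$, $w_n$ quoted from \cite{Biyogmam}, exactly as you describe. Your unwinding of each generator through the flip and $\Phi^{-1}$, including the signs $(-1)^{i+j-1}$ and $(-1)^{j-1}$, matches the paper's stated formulas for $I$, $\rho$, $\Gamma$, and $\mu$.
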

Now, $\delta I = 0$, $\delta \rho = 0$, and $\delta \Gamma  = (n - 1) (-1)^{n-1} \mu$.  

\begin{lemma}
For $n \geq 3$, $H^*_{\rm{Lie}} ( \fhn ; \, \fhn ) \simeq \langle I, \, \rho \rangle \otimes H^*_{\rm{Lie}} ( \son ; \, \br)$.
\end{lemma}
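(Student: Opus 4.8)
The plan is to lean entirely on the Hochschild--Serre reduction already recorded above, namely $H^*_{\rm{Lie}}(\fhn; \, \fhn) \simeq H^*_{\rm{Lie}}(\son; \, \br) \otimes [H^*_{\rm{Lie}}(J_n; \, \fhn)]^{\son}$, so that the whole problem collapses to computing the cohomology of the invariant cochain complex $[{\rm{Hom}}(J^{\wedge *}_n, \, \fhn)]^{\son}$. First I would invoke the $\son$-equivariant isomorphism $\Phi$ of Lemma \eqref{iso-invar} to replace this complex, in each degree $k$, by $(\fhn \otimes J^{\wedge k}_n)^{\son} \simeq (J_n \otimes J^{\wedge k}_n)^{\son} \oplus (\son \otimes J^{\wedge k}_n)^{\son}$. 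The invariant-theory computations cited from \cite{Biyogmam} then show that these invariants vanish in every degree except $k = 1, \, 2, \, n-2, \, n-1$, where they are one-dimensional and generated by $I$, $\rho$, $\Gamma$, and $\mu$ respectively. In particular the degree-$0$ term $(\fhn)^{\son} = (\son)^{\son} \oplus (J_n)^{\son}$ is zero for $n \geq 3$, since $\son$ has trivial center and the standard representation $J_n$ has no nonzero invariant vector.

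Next I would assemble the differential. Among the four generators the only nonzero coboundary is the one already computed, $\delta \Gamma = (n-1)(-1)^{n-1}\mu$, together with $\delta I = 0$ and $\delta \rho = 0$; moreover $\delta \mu = 0$ holds for free, because its target $(\fhn \otimes J^{\wedge n}_n)^{\son}$ vanishes (the top form is $\son$-invariant, but again $(J_n)^{\son} = (\son)^{\son} = 0$). When $n \geq 5$ the four distinguished degrees are pairwise distinct, so the complex is a direct sum of the trivial pieces $\langle I \rangle$ in degree $1$ and $\langle \rho \rangle$ in degree $2$ (each a cocycle in the image of no differential), together with the two-term acyclic piece $\langle \Gamma \rangle \to \langle \mu \rangle$ in degrees $n-2, \, n-1$. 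Reading off cohomology is then immediate: $\Gamma$ dies because it is not a cocycle and $\mu$ dies because it is a coboundary, so $[H^*_{\rm{Lie}}(J_n; \, \fhn)]^{\son} \simeq \langle I, \, \rho \rangle$, and tensoring back with $H^*_{\rm{Lie}}(\son; \, \br)$ gives the asserted isomorphism.

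The point requiring the most care is the behavior at the small values $n = 3$ and $n = 4$, where the four distinguished degrees collide: for $n = 3$ the pairs $(I, \, \Gamma)$ and $(\rho, \, \mu)$ share degrees $1$ and $2$, while for $n = 4$ the generators $\rho$ and $\Gamma$ both sit in degree $2$. In each such case I would simply verify the cohomology by hand. For $n = 4$, for instance, the kernel of $\delta$ in degree $2$ is exactly $\langle \rho \rangle$ because $\delta \Gamma = -3\mu \neq 0$, while $\mu$ is killed in degree $3$; and for $n = 3$ the degree-$1$ kernel is $\langle I \rangle$ and the degree-$2$ cohomology is $\langle \rho, \, \mu \rangle / \langle \mu \rangle = \langle \rho \rangle$. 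In both cases the answer $\langle I, \, \rho \rangle$, with $I$ in degree $1$ and $\rho$ in degree $2$, is unchanged. I expect this degenerate-degree bookkeeping, rather than any genuinely new calculation, to be the only real obstacle, since both the coboundary formulas and the invariant classes are already in hand.
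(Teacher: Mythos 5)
Your proposal is correct and follows exactly the route the paper intends: the paper states this lemma without a written proof because it is the immediate assembly of the preceding results (the Hochschild--Serre reduction, the isomorphism $\Phi$ of Lemma \eqref{iso-invar}, the invariant computations from \cite{Biyogmam}, and the coboundary facts $\delta I = 0$, $\delta \rho = 0$, $\delta \Gamma = (n-1)(-1)^{n-1}\mu$), which is precisely what you carry out. Your explicit check of the degree collisions at $n = 3$ and $n = 4$ is a worthwhile detail that the paper leaves tacit, and your bookkeeping there is accurate.
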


Note that $I : J_n \to \fhn$ may be extended to $I : \fhn \to \fhn$ by requiring that $I ( \alpha_{ij} ) = 0$.  
Also, $\rho : J^{\wedge 2}_n \to \fhn$ may be extended to $\rho : \fhn^{\wedge 2} \to \fhn$ by 
requiring that $\rho (g_1 \wedge g_2) = 0$ if either $g_i \in \son$.  Any element $\theta \in H^k_{\rm{Lie}}( \son ; \br )$
is represented by an $\son$-invariant cocycle $ \theta : (\son )^{ \wedge k} \to \br$ that can be extended
to $\theta  : \fhn^{\wedge k} \to \br$ by requiring that $\theta (h_1 \wedge h_2 \wedge \ldots \wedge h_n) = 0$
if any $h_i \in J_n$.  Under the isomorphism of the Hochschild-Serre spectral sequence, $I \otimes \theta$ and
$\rho \otimes \theta$ correspond to the cocycles $ I \wedge \theta : \fhn^{\wedge (k+1)} \to \fhn$ and
$\rho  \wedge \theta : \fhn^{\wedge (k+2)} \to \fhn$ respectively, where $I \wedge \theta$ and $\rho \wedge \theta$ 
represent the shuffle product.  For completeness, we close this section with a statement about 
$H^*_{\rm{Lie}} (\son ; \, \br)$, although the specific elements in $H^*_{\rm{Lie}} (\son ; \, \br)$ do not survive to
$HL^* ( \fhn ; \, \fhn )$.  From \cite{Ito}, we have

\begin{theorem}
Let $n \geq 3$.   For $n$ odd,
$$  H^*_{\rm{Lie}} ( \son ; \, \br ) \simeq \Lambda [ \{ x_{4i-1} \, | \ 0 < 2i < n \} ].  $$
For $n$ even,
$$  H^*_{\rm{Lie}} ( \son ; \, \br ) \simeq \Lambda [ \{ x_{4i-1}\, | \ 0 < 2i < n \} ] \otimes \Lambda [ \{ y_{n-1} \} ].  $$
\end{theorem}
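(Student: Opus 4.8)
The plan is to reduce the statement to the classical structure theory for the cohomology of a compact semisimple Lie algebra together with the explicit invariant theory of $\son$; since all the ingredients are standard, the final step is a citation to \cite{Ito}. First I would use that $\son$ is semisimple for $n \geq 3$ and is the Lie algebra of the compact group $SO(n)$. By averaging the Chevalley-Eilenberg cochains over $SO(n)$ (equivalently, by the vanishing of the Casimir operator on harmonic, i.e.\ invariant, forms), the coboundary $\delta$ vanishes identically on the subspace $(\Lambda^* \son^*)^{\son}$ of $\son$-invariant exterior forms, and the inclusion of invariants induces a ring isomorphism $H^*_{\rm{Lie}}(\son ; \br) \simeq (\Lambda^* \son^*)^{\son}$. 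This turns the cohomology computation into a purely invariant-theoretic one.

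Next I would invoke Hopf's theorem: the invariant algebra $(\Lambda^* \son^*)^{\son}$ is a graded-commutative Hopf algebra, hence an exterior algebra on primitive generators, all concentrated in odd degrees. The number and degrees of these generators are governed by transgression from the ring of invariant polynomials $(S^* \son^*)^{\son}$. By Chevalley's restriction theorem the latter is a polynomial algebra whose homogeneous generators have polynomial degrees $d_i = e_i + 1$, where the $e_i$ are the exponents of the root system; the Chevalley-Koszul transgression then carries a polynomial generator of degree $d$ to a primitive exterior generator of cohomological degree $2d - 1 = 2e_i + 1$.

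Finally I would insert the exponents of $\son$. For $n = 2m+1$ (type $B_m$) the exponents are $1, 3, \ldots, 2m-1$, producing generators in degrees $4i - 1$ for $1 \leq i \leq m$, which is exactly the set $\{ x_{4i-1} \mid 0 < 2i < n \}$ and gives the odd case. For $n = 2m$ (type $D_m$) the exponents are $1, 3, \ldots, 2m-3$ together with the extra exponent $m-1$: the former yield the $x_{4i-1}$ with $0 < 2i < n$, while the latter, arising from the Pfaffian/Euler invariant rather than from a Pontryagin class, transgresses to the single additional generator $y_{n-1}$, giving the even case.

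The main obstacle will be the transgression bookkeeping in the even case. The difficulty is intrinsic: the top Pontryagin invariant is the square of the Pfaffian, so it is not an independent polynomial generator, and the naive degree count $4i-1$ must be truncated at $i = m-1$ and supplemented by the Euler-type class $y_{n-1}$. Verifying that transgression produces exactly this pattern---in particular that the Pfaffian of degree $m$ contributes the degree $n-1$ generator and nothing else---is the delicate point distinguishing $D_m$ from $B_m$. Because this is entirely classical, I would in the end appeal to \cite{Ito}.
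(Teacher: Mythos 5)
Your proposal is correct, and its final step---citing \cite{Ito}---is exactly the paper's entire proof: the paper presents this theorem as a classical fact, introduced only by the phrase ``From \cite{Ito}, we have,'' with no argument supplied. Your reconstruction of the classical route (reduction to invariant forms via compactness, Hopf's theorem, Chevalley--Koszul transgression, and the exponents of types $B_m$ and $D_m$) is accurate, including the degree bookkeeping $2e_i+1 = 4i-1$ and the Pfaffian contribution $y_{n-1}$ in the even case, so it supplies strictly more detail than the paper does.
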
  

\subsection{Leibniz Cohomology with Adjoint Coefficients}

In this subsection we compute $HL^* ( \fhn ; \, \fhn )$ for the affine extension $\fhn$ of $\son$, $n \geq 3$.
In low dimensions, we have
\begin{align*}
& HL^0 (\fhn ; \, \fhn ) \simeq H^0_{\rm{Lie}} ( \fhn ; \, \fhn ) \simeq (\fhn )^{\son} \simeq \{ 0 \}  \\
& HL^1 (\fhn ; \, \fhn ) \simeq H^1_{\rm{Lie}} ( \fhn ; \, \fhn ) \simeq \langle I \rangle . \\
\end{align*}
where $I : \fhn \to \fhn$ is given above.  The calculations for $HL^* ( \fhn ; \, \fhn )$
proceed in a recursive manner, using results from lower dimensions to compute higher dimensions.
Since the $HR^m ( \fhn )$ groups are know by Lemma \eqref{HR-calculation}, information can be gleaned
about the Pirashvili spectral sequence with
$$  E^{m, \, k}_2 \simeq HR^m ( \fhn ) \otimes HL^k ( \fhn ; \, \fhn ), \ \ \ m \geq 0, \ \ k \geq 0 , $$
where previous results are substituted for $HL^k ( \fhn ; \, \fhn )$.
This spectral sequence converges to $H^*_{\rm{rel}} ( \fhn ; \, \fhn )$.  The $H^*_{\rm{rel}} ( \fhn ; \, \fhn )$
groups can then be inserted into the ``Lie-to-Leibniz" long exact sequence \eqref{H-rel} to determine the next
dimension(s) of $HL^* ( \fhn ; \, \fhn )$.  To illustrate this strategy in an easy example, note that
since 
$$  HL^0 ( \fhn ; \, \fhn) = 0,  $$ 
we have $E^{0, \, 0}_2 = 0$.  (In fact, $E^{m, \, 0}_2 = 0$, $m \geq 0$).  Thus,
$H^0_{\rm{rel}} ( \fhn ; \, \fhn) = 0$, and
$$  H^2_{\rm{Lie}} ( \fhn ; \, \fhn) \overset{\pi^*_{\rm{rel}}}{\longrightarrow} HL^2 ( \fhn ; \, \fhn )  $$
is an isomorphism.  In fact, $HL^2 ( \fhn ; \, \fhn)$ is generated by the class of
$$  \fhn \otimes \fhn \overset{\pi}{\longrightarrow} \fhn \wedge \fhn \overset{\rho}{\longrightarrow} \fhn ,  $$
which we simply denote as $\rho : \fhn^{\otimes 2} \to \fhn$
with $\rho ( \frac{\p}{\p x^i} \otimes \frac{\p}{\p x^j} ) = \alpha_{ij}$
for $i = 1$, 2, $\dots \,$, $n$, and $j = 1$, 2, $\ldots \, $, $n$.  Note that $\alpha_{ij} = - \alpha_{ji}$.  

\begin{lemma} \label{HL-0-n+1}
For $n \geq 3$, $0 \leq * \leq n + 1$, we have
$HL^* ( \fhn ; \, \fhn ) \simeq \langle I, \, \rho \rangle \otimes \big( \br \oplus \langle \gamma^*_n \rangle \big)$,
where 
$$ \gamma^*_n = \sum_{1 \leq i < j \leq n} (-1)^{i+j-1} \alpha^*_{ij} \otimes dx^1 \wedge dx^2 \wedge \ldots
\widehat{d}x^i \ldots \widehat{d}x^j \ldots \wedge dx^n .  $$
\end{lemma}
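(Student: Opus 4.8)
\noindent
\textit{Strategy of proof.}
The plan is to run the recursive machine described above: substitute the already-known low-degree groups $HL^k(\fhn;\fhn)$ into the Pirashvili spectral sequence $E_2^{m,k}\simeq HR^m(\fhn)\otimes HL^k(\fhn;\fhn)$ to compute $H^*_{\rm{rel}}(\fhn;\fhn)$, and then feed the resulting relative groups into the Lie-to-Leibniz long exact sequence \eqref{H-rel} to read off the next values of $HL^*$. The three standing inputs are the base cases $HL^0=0$, $HL^1=\langle I\rangle$, $HL^2=\langle\rho\rangle$; the adjoint Lie cohomology $H^*_{\rm{Lie}}(\fhn;\fhn)\simeq\langle I,\rho\rangle\otimes H^*_{\rm{Lie}}(\son;\br)$; and the groups $HR^m(\fhn)$ of Lemma \eqref{HR-calculation}. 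I would argue by induction on the cohomological degree, handling the case $n=3$ --- in which both $\g$ and the generator $x_3$ of $H^3_{\rm{Lie}}(\son;\br)$ lie in $HR^0$ --- separately.

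First I would record where the inputs are supported in the range $*\le n+1$. Since $H^j_{\rm{Lie}}(\son;\br)=0$ for $0<j<3$, the group $H^*_{\rm{Lie}}(\fhn;\fhn)$ is concentrated in degrees $1$ (the class $I$), $2$ ($\rho$), $4$ ($I\otimes x_3$), $5$ ($\rho\otimes x_3$), and, when $n$ is even, $n$ ($I\otimes y_{n-1}$) and $n+1$ ($\rho\otimes y_{n-1}$). On the other side, Lemma \eqref{HR-calculation} places $x_3$ in $HR^0$, the class $y_{n-1}$ (for $n$ even) in $HR^{n-4}$, and the factored volume class $\g$ in $HR^{n-3}$. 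Substituting the known $HL^k$ into the $E_2$ term, the classes that can survive to $H^*_{\rm{rel}}$ in total degree $\le n-1$ are $x_3\otimes I$, $x_3\otimes\rho$, $\g\otimes I$, $\g\otimes\rho$ and, for $n$ even, $y_{n-1}\otimes I$, $y_{n-1}\otimes\rho$; these a priori contribute to $H^1_{\rm{rel}}$, $H^2_{\rm{rel}}$, $H^{n-2}_{\rm{rel}}$, $H^{n-1}_{\rm{rel}}$ and (for $n$ even) $H^{n-3}_{\rm{rel}}$, $H^{n-2}_{\rm{rel}}$.

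The heart of the argument is a cancellation principle already anticipated in the remark preceding the lemma: every generator of $H^{>0}_{\rm{Lie}}(\son;\br)$ appearing in this range (namely $x_3$, together with $y_{n-1}$ for $n$ even) contributes a matching class to both $HR^*(\fhn)$ and $H^*_{\rm{Lie}}(\fhn;\fhn)$, and the two copies must annihilate one another --- through the spectral-sequence differentials and the connecting maps $c_{\rm{rel}}$ of \eqref{H-rel} --- so that no $\son$-class survives to $HL^*$. By contrast $\g$ is attached to the unit of $H^0_{\rm{Lie}}(\son;\br)$ and has no Lie-cohomology partner, so it cannot be cancelled. Concretely I would show: (i) that $\g\otimes I$ and $\g\otimes\rho$ are permanent cycles --- the only threatening differential being the $d_2$ that for small $n$ could carry $x_3\otimes\rho$ into the slot of $\g\otimes I$, which I would kill by exhibiting explicit closed relative cocycles for $\g\otimes I$ and $\g\otimes\rho$, using that $I$ and $\rho$ already lift to genuine Leibniz classes; and (ii) that $c_{\rm{rel}}$ carries the surviving classes $I\otimes x_3\in H^1_{\rm{rel}}$ and $\rho\otimes x_3\in H^2_{\rm{rel}}$ (and, for $n$ even, $I\otimes y_{n-1}$ and $\rho\otimes y_{n-1}$) isomorphically onto the corresponding $\son$-classes in $H^4_{\rm{Lie}}$, $H^5_{\rm{Lie}}$, $H^n_{\rm{Lie}}$, $H^{n+1}_{\rm{Lie}}$. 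Granting (i) and (ii), exactness of \eqref{H-rel} forces $\pi^*_{\rm{rel}}$ to annihilate all these $\son$-classes (so they die in $HL^*$) and forces $HL^3=\cdots=HL^{n-1}=0$, while $c_{\rm{rel}}$ necessarily vanishes on $I\otimes\g$ and $\rho\otimes\g$, lifting them to generators of $HL^n$ and $HL^{n+1}$. This yields $HL^*(\fhn;\fhn)\simeq\langle I,\rho\rangle\otimes(\br\oplus\langle\g\rangle)$ in the asserted range.

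The step I expect to be the main obstacle is pinning down the connecting homomorphisms $c_{\rm{rel}}$ together with the vanishing of the single dangerous $d_2$. Both require tracing explicit (co)cycle representatives through the cokernel filtration defining $C^*_{\rm{rel}}$ rather than merely counting dimensions: the entire computation collapses unless $c_{\rm{rel}}$ maps each $\son$-shadow isomorphically onto its Lie-cohomology counterpart while annihilating the $\g$-classes. The bookkeeping depends mildly on the parity of $n$ (the classes $y_{n-1}$ intrude only when $n$ is even) and on how $n$ compares with the degrees $3,7,\dots$ of the exterior generators of $H^*_{\rm{Lie}}(\son;\br)$, so a short separate check for the smallest values of $n$ is to be expected.
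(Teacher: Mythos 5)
Your proposal is correct in outline and follows essentially the same route as the paper's proof: the paper likewise feeds the base cases $HL^0=0$, $HL^1=\langle I\rangle$, $HL^2=\langle\rho\rangle$ into the Pirashvili $E_2$-term, shows that $I\otimes\g$ and $\rho\otimes\g$ are permanent cycles with vanishing connecting homomorphism $c_{\rm{rel}}$ (your step (i)), hence give the generators of $HL^n$ and $HL^{n+1}$, and shows that $c_{\rm{rel}}[I\otimes\theta']=[-I\wedge\theta]$ and $c_{\rm{rel}}[\rho\otimes\theta']=[\rho\wedge\theta]$ for an arbitrary $\theta'\in HR^m(\fhn)$ with $c_R(\theta')=\theta\in H^{m+3}_{\rm{Lie}}(\son;\br)$ (your step (ii)), so that exactness of \eqref{H-rel} kills every $\son$-class in $HL^*$. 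The one adjustment to make is to run the cancellation for an arbitrary class $\theta$ of $H^{>0}_{\rm{Lie}}(\son;\br)$, as the paper does, rather than for your explicit list: for $n\ge 7$ the window $*\le n+1$ already contains $I\wedge x_7$ (and for larger $n$ further generators and products of the $x_{4i-1}$), so the parenthetical ``namely $x_3$, together with $y_{n-1}$ for $n$ even'' undercounts the classes to be cancelled, although your stated cancellation principle disposes of them by the identical argument.
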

\begin{proof}
The results for $* = 0, \, 1, \, 2$ follow from above.  We consider the next iteration of elements in the $E_2^{*, \, *}$ 
term of the Pirashvili spectral sequence.  These are:
\begin{equation*}
\begin{split}
&  I \otimes \gamma^*_n \in HL^1 ( \fhn ; \, \fhn ) \otimes HR^{n-3} (\fhn ) \hookrightarrow E_2^{n-3, \, 1} \\
& \rho \otimes \gamma^*_n \in HL^2 ( \fhn ; \, \fhn ) \otimes HR^{n-3} (\fhn ) \hookrightarrow E_2^{n-3, \, 2} \\
& I \otimes \theta'  \in HL^1 ( \fhn ; \, \fhn ) \otimes HR^{m} (\fhn ) \hookrightarrow E_2^{m, \, 1} \\
& \rho \otimes \theta'  \in HL^2 ( \fhn ; \, \fhn ) \otimes HR^{m} (\fhn ) \hookrightarrow E_2^{m, \, 2} ,
\end{split}
\end{equation*}
where $c_R ( \theta' ) = \theta \in H_{\rm{Lie}}^{m+3} ( \son ; \, \br )$ in exact sequence \eqref{HR-exact} 
(the ``Lie-coadjoint" exact sequence).

Now,
\begin{equation*}
\begin{split}
\delta ( I \otimes \gamma^*_n ) & ( g_1 \otimes g_2 \otimes \ldots \otimes g_{n+1} ) = 
( \delta I ) \otimes \gamma^*_n - I \otimes ( \delta \g )  + \\
&  \sum_{i = 3}^{n+1} (-1)^i (g_i I) (g_1) \, \g ( g_2 \otimes g_3\otimes \ldots \widehat{g}_i \ldots  \otimes g_{n+1} ) .  
\end{split}
\end{equation*}
Note that $\delta I = 0$ and $\delta ( \g) = 0$.  
Since $I$ is an $\son$-invariant as well as an $J_n$-invariant, we have $g_i I = 0$ for all $g_i \in \fhn$.  
Thus, $\delta ( I \otimes \g ) = 0$, and $I \otimes \g$ is an absolute cocycle in both $C^{n-2} _{\rm{rel}}( \fhn ; \, \fhn )$
and $CL^n ( \fhn ; \, \fhn )$.  Since $c_{\rm{rel}} ( I \otimes \g ) = [ \delta ( I \otimes \g ) ] = 0$ in
$H^{n+1}_{\rm{Lie}} ( \fhn ; \, \fhn )$, $I \otimes \g$ represents an element in $HL^n ( \fhn ; \, \fhn )$.  
Also,
\begin{equation}  \label{delta-rho-g}
\begin{split}
\delta ( \rho \otimes \g ) & ( g_1 \otimes g_2 \otimes \ldots \otimes g_{n+2} ) =  
( \delta \rho ) \otimes \g + \rho \otimes ( \delta \g ) + \\
& \sum_{i = 4}^{n + 2} (-1)^i ( g_i \rho )( g_1 \otimes g_2 ) \, \g ( g_3 \otimes g_4 \otimes \ldots 
\widehat{g}_i \ldots \otimes g_{n+2} ).
\end{split}
\end{equation}
Clearly, $\delta \rho = 0$ and $\delta \g = 0$.  In the last summand in equation \eqref{delta-rho-g} above,
the $\g$ term is zero unless $g_3 \in \son$ and $g_i \in J_n$, $i \geq 4$.  Since 
$d^2( \rho \otimes \g ) \in F_{n-1}$, $d^2 ( \rho \otimes \g )$ is skew-symmetric in $g_3$ and $g_4$.  But
every term of $\g ( g_4 \otimes g_3 \otimes \ldots \widehat{g}_i \ldots g_{n+2} )$ would then be zero.  Thus,
$d^2 ( \rho \otimes \g ) = 0$.  The same argument applies to higher differentials such as
$d^3 ( \rho \otimes \g ) \in F_n$.  Thus $\rho \otimes \g$ corresponds to a class in 
$H^{n+1}_{\rm{rel}} (\fhn ; \, \fhn)$.  Skew-symmetrization considerations of $\g$ also lead to 
$c_{\rm{rel}} ( \rho \otimes \g ) = [ \delta ( \rho \otimes \g ) ] = 0$ in $H^{n+2}_{\rm{Lie}} ( \fhn ; \, \fhn )$.  
Thus, $\rho \otimes \g$ corresponds to a class in $HL^{n+1} ( \fhn ; \, \fhn )$.  

Consider $\theta' \in HR^m ( \fhn )$ with $c_R ( \theta' ) = \theta \in H^{m+3}_{\rm{Lie}} ( \son ; \, \br )$.  Then
$$  \delta ( I \otimes \theta' ) = - I \otimes  ( \delta \theta' ) = - I \otimes c_R ( \theta' ) = - I \otimes \theta .  $$
From the Hochschild-Serre spectral sequence, it follows that $[ I \otimes \theta ] = [ I \wedge \theta ]$ 
represents a non-zero class in $H^*_{\rm{Lie}} ( \fhn ;  \, \fhn )$.  Now, $\pi^*_{\rm{rel}} ( [ I \otimes \theta ] ) = 0$
in exact sequence \eqref{H-rel}, since $I \otimes \theta$ is a coboundary in $CL^* ( \fhn ; \, \fhn )$.  Thus,
$I \otimes \theta'$ represents a non-zero element in $H^*_{\rm{rel}} (  \fhn ;  \, \fhn )$ with
$$  c_{\rm{rel}} ( [ I \otimes \theta' ] ) = [ \delta ( I \otimes \theta' ) ] = [ - I \otimes \theta ] .  $$

Now,
\begin{equation}  \label{rho-theta'}
\begin{split}
\delta ( \rho \otimes \theta' ) & ( g_1 \otimes g_2 \otimes \ldots \otimes g_{m+5} ) = \rho \otimes ( \delta \theta') + \\
& \sum_{j=4}^{m+5} (-1)^j (g_i \rho )(g_1 \otimes g_2 ) \, \theta' ( g_3 \otimes g_4 \otimes \ldots \hat{g}_j \ldots
\otimes g_{m+5} )
\end{split}
\end{equation}
The element $\theta' \in HR^m ( \fhn )$ can be chosen so that 
$\theta' ( y_1 \otimes y_2 \otimes \ldots \otimes y_{m+2} ) = 0$ if any $y_i \in J_n$, while $\rho$ is an
$\son$-invariant.  Suppose then that $g_j \in J_n$ for one and only one $j \in \{ 4, \, 5, \, \ldots \, , m+5 \}$.  
We have  $\rho \otimes \theta' \in F_m$ and $d^2 ( \rho \otimes \theta' ) \in F_{m+2}$.  Clearly,
$\delta ( \theta' ) = \theta$ is already skew-symmetric in $g_3$, $g_4$, $\ldots \, $, $g_{m+5}$ in equation
\eqref{rho-theta'}.  Thus, for $d^2 ( \rho \otimes \theta' )$, we must have
\begin{equation*}
\begin{split}
(g_j \rho) & ( g_1 \otimes g_2 )  \, \theta' (g_3 \otimes g_4 \otimes \ldots \widehat{g}_j \ldots \otimes g_{m+5} ) = \\
& \pm (g_3 \rho ) ( g_1 \otimes g_2 )  \, \theta' (g_j \otimes g_4 \otimes \ldots \widehat{g}_j \ldots \otimes g_{m+5} ) = 0,
\end{split}
\end{equation*}
since $g_3 \in \son$ and $\rho$ is an $\son$-invariant.  Also,
$$  d^3 ( \rho \otimes \theta' ) \in HL^0 ( \fhn ; \, \fhn ) \otimes HR^{m+3}( \fhn ) = \{ 0 \} .  $$
Thus, $\rho \otimes \theta'$ represents an element in $H^{m+2}_{\rm{rel}} ( \fhn ; \, \fhn )$ and 
$$  c_{\rm{rel}}( [ \rho \otimes \theta' ] ) = [ \delta ( \rho \otimes \theta' ) ] = [ \rho \otimes \theta ] = [ \rho \wedge \theta ]  $$
in $H^{m+5}_{\rm{Lie}} ( \fhn ; \, \fhn )$, where $c_{\rm{rel}}$ is the connecting homomorphism 
the ``Lie-to-Leibniz" exact sequence.  

\end{proof}

\begin{theorem}
For $n \geq 3$, $HL^* ( \fhn ; \, \fhn ) \simeq \langle I , \,  \rho \rangle \otimes T( \gamma^*_n )$, where
$T( \gamma^*_n ) := \sum_{q \geq 0} \langle \gamma^*_n \rangle ^{\otimes q}$ is the tensor algebra on the class of
$\gamma^*_n$.
\end{theorem}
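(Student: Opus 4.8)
The plan is to prove the theorem by continuing to all cohomological degrees the recursion begun in Lemma~\eqref{HL-0-n+1}. The two engines are the Pirashvili spectral sequence $E_2^{m, \, k} \simeq HR^m(\fhn) \otimes HL^k(\fhn; \, \fhn)$, converging to $H^*_{\rm{rel}}(\fhn; \, \fhn)$, and the ``Lie-to-Leibniz'' long exact sequence \eqref{H-rel}. I would set up an induction on the tensor power $q$, with inductive hypothesis that $HL^*(\fhn; \, \fhn) \simeq \langle I, \, \rho \rangle \otimes \big( \bigoplus_{p=0}^{q} \langle \g \rangle^{\otimes p} \big)$ through degree $2 + q(n-1)$. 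The base case $q = 1$ is exactly Lemma~\eqref{HL-0-n+1}, which already records the first appearance of $\g$ via the classes $I \otimes \g \in HL^n(\fhn; \, \fhn)$ and $\rho \otimes \g \in HL^{n+1}(\fhn; \, \fhn)$. Each pass through the spectral sequence and the long exact sequence raises the tensor power by one.

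For the inductive step I would substitute the known $HL^k(\fhn; \, \fhn)$ into the $E_2$ term and read off $HR^m(\fhn) \simeq H^{m+3}_{\rm{Lie}}(\son; \, \br) \oplus \big( H^{m+3-n}_{\rm{Lie}}(\son; \, \br) \otimes \langle \g \rangle \big)$ from Lemma~\eqref{HR-calculation}. The single factor $\langle \g \rangle$ sitting in $HR^{n-3}(\fhn)$ multiplies the top inductive classes $I \otimes \g^{\otimes q}$ and $\rho \otimes \g^{\otimes q}$ to produce $(I \otimes \g^{\otimes q}) \otimes \g$ and $(\rho \otimes \g^{\otimes q}) \otimes \g$ in $E_2^{n-3, \, *}$. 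Exactly as in Lemma~\eqref{HL-0-n+1}, the cocycle identities $\delta I = 0$, $\delta \rho = 0$, $\delta \g = 0$ together with the $\son$- and $J_n$-invariance of $I$ and $\rho$ force the cross terms of $\delta$ to vanish, while skew-symmetrization in the $\g$-factors kills every higher differential. These permanent cycles therefore survive to $H^*_{\rm{rel}}(\fhn; \, \fhn)$ with trivial $c_{\rm{rel}}$ and lift along \eqref{H-rel} to the genuine Leibniz classes $I \otimes \g^{\otimes (q+1)} \in HL^{1 + (q+1)(n-1)}(\fhn; \, \fhn)$ and $\rho \otimes \g^{\otimes (q+1)} \in HL^{2 + (q+1)(n-1)}(\fhn; \, \fhn)$.

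It remains to check that nothing else survives. The complementary summand $H^{m+3}_{\rm{Lie}}(\son; \, \br)$ of $HR^m(\fhn)$ produces the ``echo'' classes $I \otimes \theta'$ and $\rho \otimes \theta'$, where $c_R(\theta') = \theta \in H^{m+3}_{\rm{Lie}}(\son; \, \br)$. As computed in Lemma~\eqref{HL-0-n+1}, the connecting homomorphism gives $c_{\rm{rel}}([I \otimes \theta']) = [-I \wedge \theta]$ and $c_{\rm{rel}}([\rho \otimes \theta']) = [\rho \wedge \theta]$ in $H^*_{\rm{Lie}}(\fhn; \, \fhn)$. Since $H^*_{\rm{Lie}}(\fhn; \, \fhn) \simeq \langle I, \, \rho \rangle \otimes H^*_{\rm{Lie}}(\son; \, \br)$, these images run over all of $\langle I, \, \rho \rangle \otimes H^{> 0}_{\rm{Lie}}(\son; \, \br)$; lying in ${\rm{Im}}(c_{\rm{rel}}) = \ker(\pi^*_{\rm{rel}})$, every such Lie class maps to zero in $HL^*(\fhn; \, \fhn)$. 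Hence the only survivors are built from $I$, $\rho$ and powers of $\g$. A degree count then fixes the tensor-algebra structure: since $I \otimes \g^{\otimes q}$ sits in degree $1 + q(n-1)$ and $\rho \otimes \g^{\otimes q}$ in degree $2 + q(n-1)$, and $n - 1 \geq 2$, no two of these classes share a degree, so $HL^*(\fhn; \, \fhn)$ is at most one-dimensional in each degree and is freely generated, yielding $\langle I, \, \rho \rangle \otimes T(\g)$.

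The main obstacle is the bookkeeping that keeps the spectral sequence and the long exact sequence interlocked in every degree. Concretely, one must confirm that the skew-symmetrization and invariance arguments of Lemma~\eqref{HL-0-n+1}, which annihilate the higher differentials on $\rho \otimes \g$ and $\rho \otimes \theta'$, propagate verbatim to all higher powers $\rho \otimes \g^{\otimes q}$ and to every differential $d^r$, and that the two connecting maps $c_R$ and $c_{\rm{rel}}$ dovetail so that \emph{every} class in $\langle I, \, \rho \rangle \otimes H^{> 0}_{\rm{Lie}}(\son; \, \br)$ is cancelled while no spurious survivor is created. Once this verification is in hand, the induction closes and the tensor algebra emerges.
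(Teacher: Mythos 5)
Your construction of the surviving classes $(I\otimes\g^{\otimes q})\otimes\g$, $(\rho\otimes\g^{\otimes q})\otimes\g$ matches the paper, as does your use of $c_{\rm{rel}}$ to cancel the Lie classes $\langle I,\rho\rangle\otimes H^{>0}_{\rm{Lie}}(\son;\,\br)$ against the relative classes $I\otimes\theta'$, $\rho\otimes\theta'$. The gap is in the step ``it remains to check that nothing else survives.'' Your accounting treats only the $q=0$ echo classes $I\otimes\theta'$, $\rho\otimes\theta'$, but the $E_2$ term also contains the families $(I\otimes\g^{\otimes q})\otimes\theta'$ and $(\rho\otimes\g^{\otimes q})\otimes\theta'$ with $q\geq 1$, together with the products of $HL$-classes with the second summand $H^{m+3-n}_{\rm{Lie}}(\son;\,\br)\otimes\langle\g\rangle$ of $HR^m(\fhn)$ from Lemma \eqref{HR-calculation}, such as $I\otimes(\g\otimes\theta)$. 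The paper removes both families at once by computing \emph{non-vanishing} differentials,
\begin{equation*}
d^n\big((I\otimes\g)\otimes\theta'\big)=I\otimes(\g\otimes\theta), \qquad
d^n\big((I\otimes(\g)^{\otimes 2})\otimes\theta'\big)=(I\otimes\g)\otimes(\g\otimes\theta),
\end{equation*}
and likewise with $\rho$ in place of $I$: sources and targets die in cancelling pairs. Your proposal asserts the opposite --- that ``skew-symmetrization in the $\g$-factors kills every higher differential'' --- so it has no mechanism for eliminating these classes; indeed your final paragraph proposes to verify exactly the vanishing that is false.

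The reason the $q=0$ mechanism does not propagate is structural. For $q=0$, $\delta(I\otimes\theta')=-I\otimes\theta$ is a fully skew-symmetric (Lie) cochain, hence zero in $C^*_{\rm{rel}}$; so $I\otimes\theta'$ is a relative cocycle and is disposed of only through its non-zero connecting image $c_{\rm{rel}}$. For $q\geq 1$, however, $\delta\big((I\otimes\g^{\otimes q})\otimes\theta'\big)=\pm(I\otimes\g^{\otimes(q-1)})\otimes(\g\otimes\theta)$ contains the factor $\g$, which is \emph{not} skew-symmetrizable (it represents a non-zero element of $HR^{n-3}(\fhn)$), so this coboundary is non-zero in the relative complex: $(I\otimes\g^{\otimes q})\otimes\theta'$ is not a relative cocycle, has no class in $H^*_{\rm{rel}}(\fhn;\,\fhn)$ and no $c_{\rm{rel}}$, and its spectral-sequence differential is forced to be non-zero. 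Skew-symmetry arguments apply only when the $HR$-factor is $\g$ itself, i.e.\ has trivial $\son$-part, which is precisely why $(I\otimes\g^{\otimes q})\otimes\g$ and $(\rho\otimes\g^{\otimes q})\otimes\g$ survive. Were all higher differentials zero, as you claim, the targets $I\otimes(\g\otimes\theta)$ and $\rho\otimes(\g\otimes\theta)$ --- which are absolute cocycles, with vanishing connecting homomorphism --- would persist to $H^*_{\rm{rel}}(\fhn;\,\fhn)$, and exactness of \eqref{H-rel} would then force classes in $HL^*(\fhn;\,\fhn)$ beyond $\langle I,\rho\rangle\otimes T(\g)$; your induction could not close. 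The missing idea is thus the explicit computation of the non-trivial $d^n$, which is the heart of the paper's proof and cannot be replaced by the long-exact-sequence bookkeeping you rely on.
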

\begin{proof}
The result for $q = 0$ and $q = 1$ follows from Lemma \eqref{HL-0-n+1}.  We begin with the next iteration
of elements in the $E^{*, \, *}_2$ term of the Pirashvili spectral sequence.  
Consider
$$  ( I \otimes \g ) \otimes \theta' \in HL^n ( \fhn ; \, \fhn ) \otimes HR^m ( \fhn ) \subseteq E^{m, \, n}_2  $$
with $c_R ( \theta' ) = \theta \in H^{m+3}_{\rm{Lie}} ( \son ; \, \br )$.  Then
\begin{equation*}
\begin{split} 
 \delta  & \big( ( I \otimes \g ) \otimes \theta' \big) ( g_1 \otimes \ldots \otimes g_{n+m+3} ) =  
 (-1)^n ( I \otimes \g) \otimes ( \delta \theta' ) + \\
& \sum_{j = n+2}^{n+m+3} (-1)^j \big( g_j ( I \otimes \g ) \big) (g_1 \otimes  \ldots  \otimes g_n ) \,
\theta' ( g_{n+1} \otimes \ldots \widehat{g}_j \ldots \otimes g_{n+m+3} ) .  
\end{split}
\end{equation*} 
Now, $I$ is an $\fhn$-invariant, and it can be checked that $\g$ is as well.  Thus, $I \otimes \g$ is an $\fhn$-invariant
and $g_j ( I \otimes \g ) = 0$ for all $g_j \in \fhn$.  Thus,
\begin{equation*}
\begin{split}
\delta \big( ( I \otimes \g ) \otimes \theta' \big) & = (-1)^n ( I \otimes \g ) \otimes \theta \\
& = (-1)^n I \otimes ( \g \otimes \theta) \in HL^1 ( \fhn ; \, \fhn ) \otimes HR^{n+m} ( \fhn ) .
\end{split}
\end{equation*}  
In the Pirashvili spectral sequence, $d^n ( ( I \otimes \g ) \otimes \theta' ) = I \otimes (\g \otimes \theta )$.  

Now consider an $\son$-invariant representation for the cohomology class corresponding to
$[ \rho \otimes \g ] \in HL^{n+1} ( \fhn ; \, \fhn )$.  Let $\theta' \in HR^m (\fhn )$ with
$c_R ( \theta' ) = \theta \in H^{m+3}_{\rm{Lie}} ( \son ; \, \br )$.  By a skew symmetry argument as before, we have
$$  d^n \big( ( \rho \otimes \g ) \otimes \theta' \big) = \rho \otimes ( \g \otimes \theta )  $$
in the Pirashvili spectral sequence.  For
$$  [ \rho \otimes \g ] \otimes \g \in HL^{n+1} ( \fhn ; \, \fhn ) \otimes HR^{n-3} ( \fhn ) \subseteq E^{n-3, \, n+1}_2 ,  $$
we have
\begin{equation*}
\begin{split}
\delta ( [ \rho \otimes \g ] &  \otimes \g )  ( g_1 \otimes \ldots \otimes g_{2n+1}) = 
\delta ( [ \rho \otimes \g ] ) \otimes \g + (-1)^{n+1} [ \rho \otimes \g ] \otimes ( \delta \g ) + \\
& \sum_{j = n+3}^{2n + 1} \big( g_j [ \rho \otimes \g ] ( g_1 \otimes \ldots \otimes g_{n+1}) \big) \, 
\g ( g_{n+2} \otimes \ldots \widehat{g}_j \ldots \otimes g_{2n + 1} ) = \\
& \sum_{j = n+3}^{2n + 1} \big( g_j [ \rho \otimes \g ] ( g_1 \otimes \ldots \otimes g_{n+1} ) \big) \, 
\g ( g_{n+2} \otimes \ldots \widehat{g}_j \ldots \otimes g_{2n + 1} ).  
\end{split}
\end{equation*}
Since $d^2 ( [ \rho \otimes \g ] \otimes \g ) \in F_{n-1}$, $d^2 ( [ \rho \otimes \g ] \otimes \g )$ is skew-symmetric in 
the variables $g_{n+2}$, $g_{n+3}$, $\ldots \, $, $g_{2n +1}$.  By a similar argument used for $\rho \otimes \g$,
we have that $[ \rho \otimes \g ] \otimes \g$ corresponds to an element in $HL^{2n} ( \fhn ; \, \fhn )$.  

For $[ I \otimes \g ] \in HL^n ( \fhn ; \, \fhn )$ and $\g \in HR^{n-3} ( \fhn )$, we have
\begin{equation*}
\begin{split}
\delta ( [ I \otimes \g ] & \otimes \g )  ( g_1 \otimes \ldots \otimes g_{2n} ) = 
\delta ( I \otimes \g ) \otimes \g + (-1)^n ( I \otimes \g ) \otimes ( \delta \g ) + \\
& \sum_{j = n+2}^{2n} (-1)^j \big( g_j ( I \otimes \g ) \big) (g_1 \otimes \ldots \otimes g_n ) \,
\g (g_{n+1} \otimes  \ldots \widehat{g}_j \ldots g_{2n} ) = 0 ,
\end{split}
\end{equation*}
since $I \otimes \g$ is an $\fhn$-invariant.  Thus, $( I \otimes \g ) \otimes \g$ represents a class in
$HL^{2n -1 }( \fhn ;  \, \fhn )$.  The theorem holds for $q = 2$.  
Also, in the Pirashvili spectral sequence, we have
\begin{equation*}
\begin{split}
& d^n \big( ( I \otimes ( \g )^{\otimes 2} ) \otimes \theta' \big) = (I \otimes  \g ) \otimes ( \g \otimes \theta )\\
& d^n \big( ( \rho \otimes ( \g )^{\otimes 2} ) \otimes \theta' \big) = (\rho \otimes  \g ) \otimes ( \g \otimes \theta ). 
\end{split}
\end{equation*} 
By induction on $q$, $HL^* ( \fhn ; \, \fhn )$ is the direct sum of vector spaces
$\langle I , \, \rho \rangle \otimes ( \g )^{\otimes q}$.  We conclude that
$$  HL^* ( \fhn ; \, \fhn ) \simeq  \langle I , \, \rho \rangle \otimes T ( \g ) .  $$

\end{proof}

% --------------------------------------------------The bibliography starts here----------------------------------------------

\end{document}